 \journalname{Journal of Mathematical Chemistry}
\newtheorem{observation}[theorem]{Observation}
\begin{document}

\title{Recursive generation of IPR fullerenes
\thanks{Jan Goedgebeur is supported by a Postdoctoral Fellowship of the Research Foundation Flanders (FWO).  Brendan McKay is supported by the Australian
Research Council.}
}

%\titlerunning{Short form of title}        % if too long for running head

\author{Jan Goedgebeur         \and
        Brendan D.\ McKay %etc.
}

%\authorrunning{Short form of author list} % if too long for running head

\institute{Jan Goedgebeur \at
              Department of Applied Mathematics, Computer Science \& Statistics, Ghent University, Krijgslaan 281-S9, 9000 Ghent, Belgium\\
              \email{jan.goedgebeur@ugent.be}           %  \\
           \and
           Brendan D.\ McKay \at
              Research School of Computer Science, Australian National University, Canberra,  ACT 2601, Australia\\
          	  \email{bdm@cs.anu.edu.au}   
}

\date{}
% The correct dates will be entered by the editor

\maketitle

\vspace{-10mm}

\begin{abstract}
We describe a new construction algorithm for the recursive generation of all non-isomorphic IPR fullerenes. 

Unlike previous algorithms, the new algorithm stays entirely within the class of IPR fullerenes, that is: every IPR fullerene is constructed by expanding a smaller IPR fullerene unless it belongs to a limited class of \textit{irreducible} IPR fullerenes that can easily be made separately.
The class of irreducible IPR fullerenes consists of 36 fullerenes with up to 112 vertices and 4 infinite families of nanotube fullerenes. Our implementation of this algorithm is faster than other generators for IPR fullerenes and we used it to compute all IPR fullerenes up to 400 vertices. \\

Note: this is a preprint which was submitted and subsequently accepted for publication in \textit{Journal of Mathematical Chemistry}. The final publication is available at Springer via \url{http://dx.doi.org/10.1007/s10910-015-0513-7}

\keywords{IPR fullerene \and Nanotube cap \and Fullerene patch \and Recursive construction \and Computation}
% \PACS{PACS code1 \and PACS code2 \and more}
% \subclass{MSC code1 \and MSC code2 \and more}
\end{abstract}

\section{Introduction}
A \textit{fullerene} is a cubic plane graph where all faces are pentagons or hexagons. Euler's formula implies that a
fullerene with $n$ vertices contains exactly 12 pentagons and $n/2 - 10$ hexagons.

The
\textit{dual} of a fullerene is the plane graph
obtained by exchanging the roles of vertices and faces: the vertex set of the dual graph
is the set of faces of the original graph and two 
vertices in the dual graph are adjacent if and only if the two faces share an edge
in the original graph. The rotational order around the vertices in
the embedding
of the dual fullerene  follows the rotational order of the faces. 

The dual
of a fullerene with $n$ vertices is a \textit{triangulation} (i.e.\ a plane graph where every face is a triangle) which contains 12 vertices with degree 5 and $n/2
- 10$ vertices with degree 6. 

In this article we will mostly use the dual representation of a fullerene, which we call a \textit{dual fullerene}, as this was the most convenient representation for our proofs and implementations.

The discovery in 1985 of the first fullerene molecule, the $C_{60}$ ``buckyball'', won the Nobel Prize for three of its discoverers~\cite{kroto_85}.  Since then many algorithms have been developed to exhaustively list (mathematical models of) fullerene isomers.

The first approach was the spiral algorithm of
Manolopoulos et al.\ in 1991~\cite{manolopoulos_91}. The spiral algorithm
was relatively inefficient and also
incomplete in the sense that not every fullerene isomer could be
generated with it. It was later modified to make it complete, but the
resulting algorithm was not efficient~\cite{manolopoulos_92}. 

An algorithm using folding nets was proposed by Yoshida and Osawa~\cite{yoshida_95} in 1995,
but its completeness remains a difficult open problem.
Liu et al.~\cite{LKSS91} and Sah~\cite{Sah_93} give other algorithms,
but they are also of limited efficiency.

The first complete and efficient generator for fullerenes was developed by Brinkmann and Dress~\cite{brinkmann_97} in 1998 and is called \textit{fullgen}. This algorithm stiches patches together which are bounded by zigzag paths.

In 2012 Brinkmann, Goedgebeur and McKay~\cite{fuller-paper} developed a new generator for all fullerenes called \textit{buckygen} using infinite families of patch replacement operations~\cite{hasheminezhad_08}. \textit{Buckygen} was significantly faster than \textit{fullgen} and contradictory results with \textit{fullgen} led to the detection of a non-algorithmic programming error in \textit{fullgen}. Due to this error some fullerenes were missed starting from 136 vertices. In the meantime this bug has already been fixed and now the results of both generators are in complete agreement. The generator of Brinkmann, Goedgebeur and McKay was also used to prove that the smallest counterexample to the spiral conjecture has 380 vertices~\cite{min_380}.

In this article we define a new construction algorithm for the recursive generation of all non-isomorphic Isolated Pentagon Rule (IPR) fullerenes based on the patch replacement operations of Hasheminezhad, Fleischner and McKay~\cite{hasheminezhad_08}. IPR fullerenes are fullerenes where no two
pentagons share an edge. These fullerenes are especially interesting as they tend to be chemically more stable and thus they are more likely to occur
in nature~\cite{IPR_ref2,IPR_ref}. 

The \textit{face-distance} between two pentagons is the distance between the corresponding vertices of degree 5 in the dual graph. So in IPR fullerenes the minimum face-distance between any two pentagons is at least two. In~\cite{distant_pentagons} we determined a formula for the number of vertices of the smallest fullerenes with a given minimum \textit{face-distance} between any two pentagons.

In Section~\ref{section:construction_operations} we present the construction operations. In Section~\ref{section:irreducible_IPR_fullerenes} we introduce the concept of a \textit{cluster} and determine the \textit{irreducible clusters}. This allows us to prove that the class of irreducible IPR fullerenes consists of 36 fullerenes with up to 112 vertices and 4 infinite families of nanotube fullerenes. Section~\ref{section:generation_algorithm} describes the generation algorithm and how we make sure that no isomorphic fullerenes are output. 

Finally, in Section~\ref{section:results} we compare our implementation of this recursive generation algorithm to other generators for IPR fullerenes.

\section{Construction operations}
\label{section:construction_operations}

A \textit{patch replacement} is a replacement of a connected fragment of a fullerene with a different fragment having identical boundary.  If the new fragment is larger than the old, we call the operation an  \textit{expansion}, and if the new is smaller than the old, we call it a \textit{reduction}.

Since the boundary determines the number of faces in a patch if it contains
fewer than two pentagons~\cite{BGJ06}, and pentagons in fullerenes can
be arbitrarily far apart, an infinite number of different patch expansions is
required to  construct all fullerenes.

Hasheminezhad et al.~\cite{hasheminezhad_08} used two infinite families of expansions to construct all fullerenes (so also non-IPR fullerenes): $L_i$ and $B_{i,j}$. These expansions are sketched in 
Figure~\ref{fig:fullerene_operations}. The lengths of the paths
between the pentagons may vary and for operation $L_i$ 
the mirror image must also be considered. 
All faces drawn completely in
the figure or labelled $f_k$ or $g_k$ have to be distinct. The
faces labelled $f_k$ or $g_k$ can be either pentagons or
hexagons, but when we refer to {\em the} pentagons of the operation, we always
mean the two faces drawn as pentagons.

\begin{figure}[h!t]
	\centering
	\includegraphics[width=0.7\textwidth]{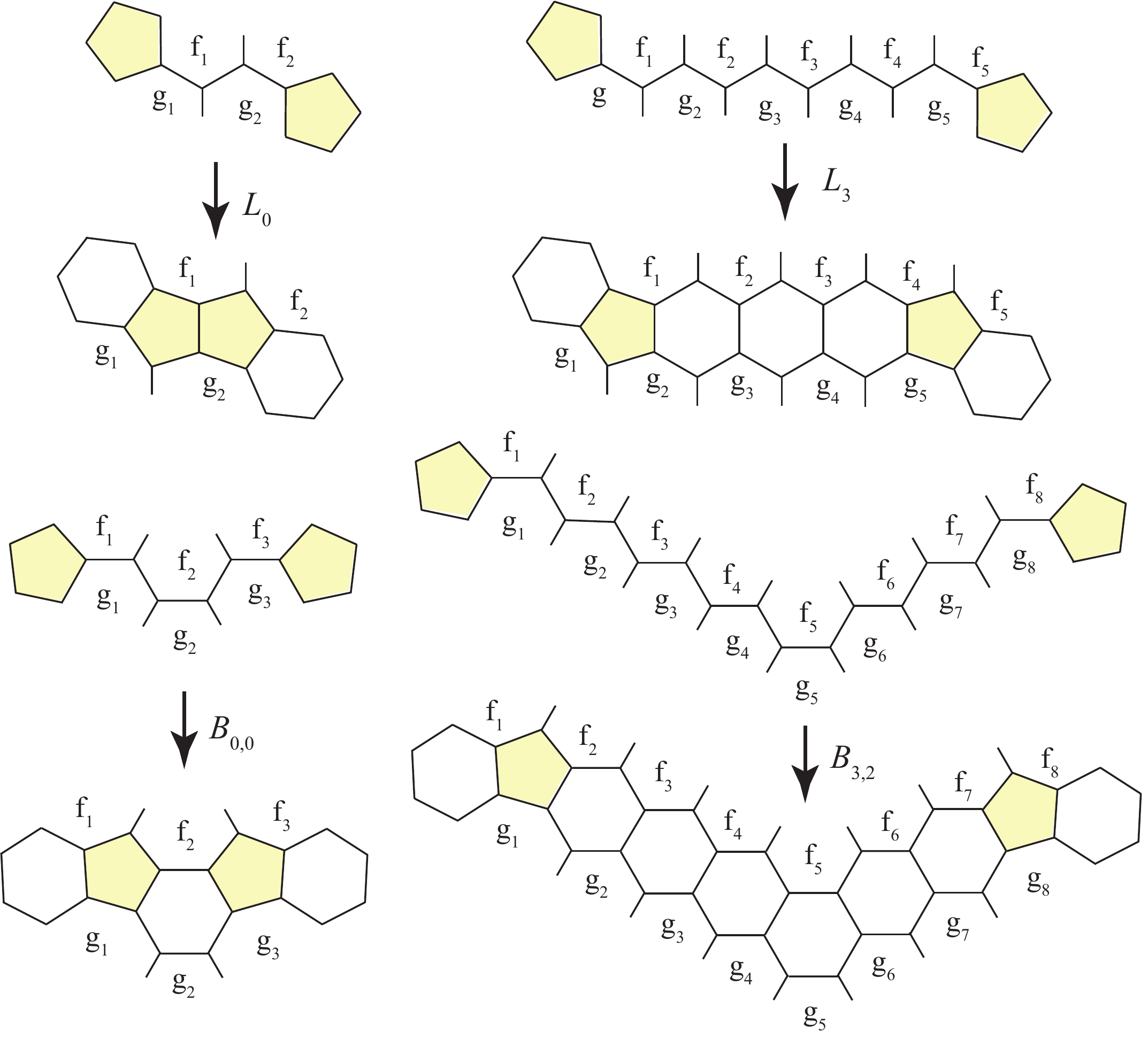}
	\caption{The $L$ and $B$ expansions for fullerenes.}
	\label{fig:fullerene_operations}
\end{figure}

In Figure~\ref{fig:fullerene_operations-dual} the $L$ and $B$
expansions of
Figure~\ref{fig:fullerene_operations} are shown in dual
representation. We will refer to vertices which have degree $k \in
\{5,6\}$ in the dual representation of a fullerene as $k$-vertices. The
solid white vertices in the figure are 5-vertices, the solid black
vertices are 6-vertices and the dashed vertices can be
either. The two 5-vertices which are involved in the reduction and the vertices which must be 6-vertices in the reduction are called the \textit{active} vertices of the reduction. 

\begin{figure}[h!t]
	\centering
	%\resizebox{13cm}{!}{\input{figures/fullerenes/reductions.pdf_t}}
	%\resizebox{0.9\textwidth}{!}{\input{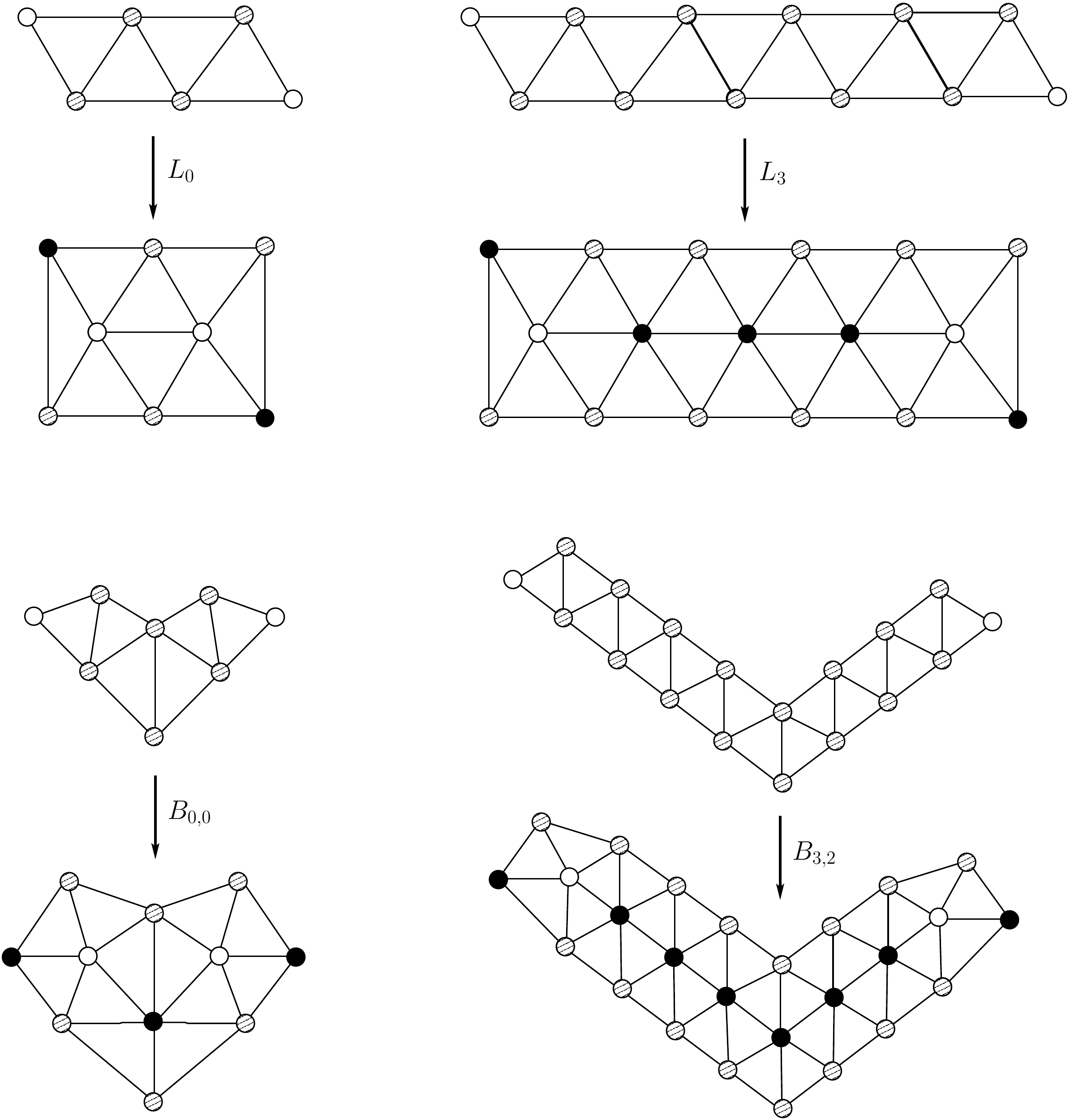_t}}
	\includegraphics[width=0.8\textwidth]{reductions.pdf}
	\caption{The $L$ and $B$ expansions in dual representation.}
	\label{fig:fullerene_operations-dual}
\end{figure}

Hasheminezhad et al.~\cite{hasheminezhad_08} have proven that every fullerene except $C_{28}(T_d)$ and type-(5,0) nanotube fullerenes, can be reduced to a smaller fullerene by applying an $L$ or $B$ reduction. This means that every fullerene isomer, except $C_{28}(T_d)$ and type-(5,0) nanotube
fullerenes can be constructed by recursively applying expansions of
type $L$ and $B$ to $C_{20}$. 

The program \textit{buckygen}~\cite{fuller-paper} by Brinkmann, Goedgebeur and McKay (which uses the operations of Hasheminezhad et al.), is a generator for all fullerenes, but it also has an option to output only IPR fullerenes by using a filter and some look-aheads. However, many IPR fullerenes are constructed by this generator by applying an expansion to a non-IPR  fullerene. So in order to generate all IPR fullerenes with $n$ vertices, most non-IPR fullerenes with less than $n$ vertices also need to be constructed by the program (see~\cite{fuller-paper} for details).

The construction algorithm which is described in this paper also uses the construction operations of Hasheminezhad et al. and can generate all IPR fullerenes, but stays entirely within the class of IPR fullerenes, that is: IPR fullerenes are constructed from smaller IPR fullerenes. We therefore only apply expansion operations to dual IPR fullerenes which lead to dual IPR fullerenes. We also refer to these operations as \textit{IPR construction operations}. So, for example, we never apply expansions of type $L_0$ or $B_{0,0}$ from Figure~\ref{fig:fullerene_operations-dual} as they result in adjacent 5-vertices.

Figure~\ref{fig:fullerene_operations_ipr} shows some examples of IPR expansions. The solid white vertices are 5-vertices, the solid black vertices are 6-vertices and the dashed ones can be either. If any of the black vertices in the initial patch of the expansion would be a 5-vertex, the expanded dual fullerene would not be IPR. The other IPR expansions are defined similarly.

\begin{figure}[h!t]
	\centering
	%\resizebox{0.95\textwidth}{!}{\input{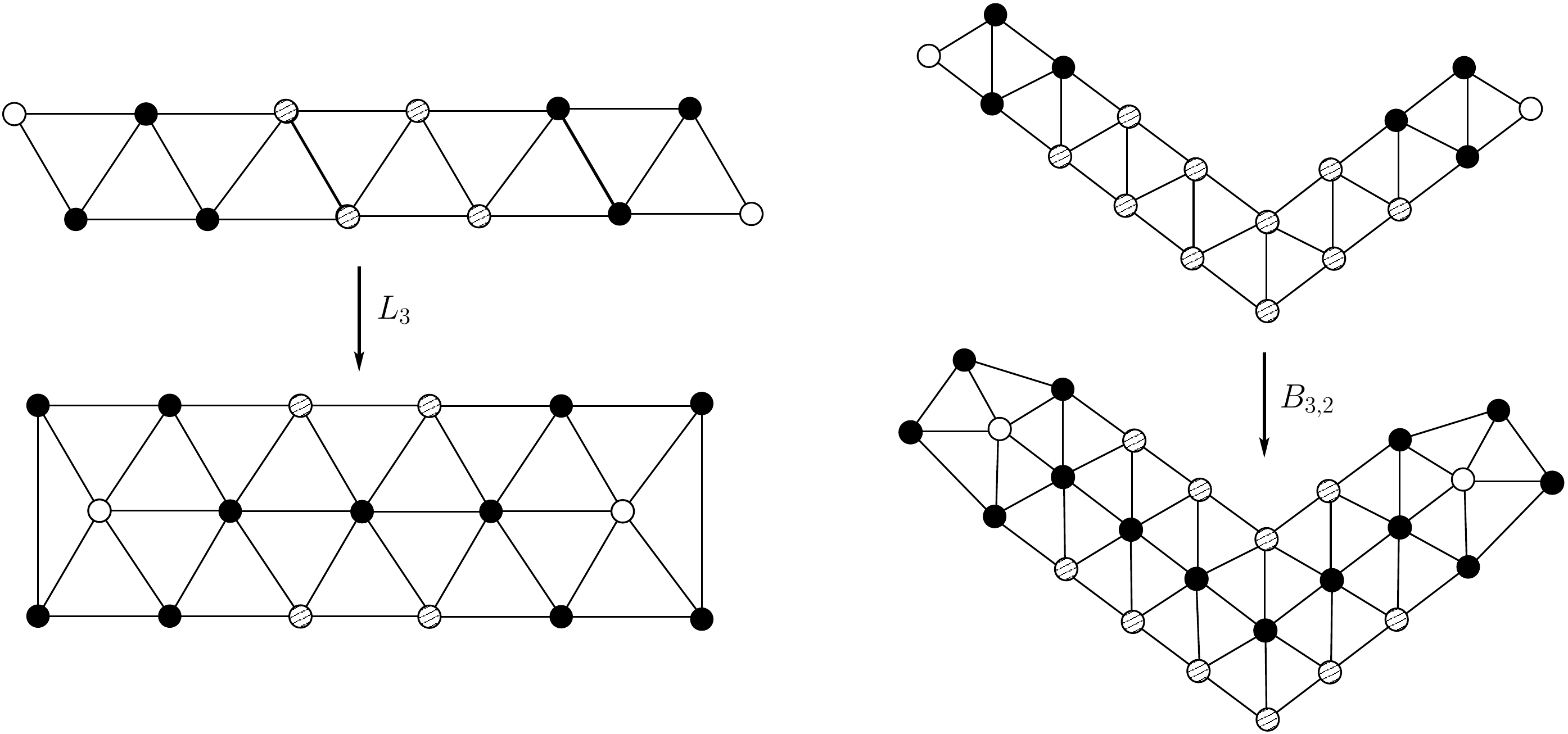_t}}
	\includegraphics[width=0.95\textwidth]{reductions_ipr.pdf}
	\caption{Examples of expansions which can lead to dual IPR fullerenes. }
	\label{fig:fullerene_operations_ipr}
\end{figure}

An IPR fullerene which cannot be reduced to a smaller IPR fullerene by applying one of the reduction operations is called an \textit{irreducible IPR fullerene}. In Section~\ref{section:irreducible_IPR_fullerenes} we prove that the class of irreducible IPR fullerenes consists of 36 fullerenes with up to 112 vertices and 4 infinite families of nanotube fullerenes. 

\section{Irreducible IPR fullerenes}
\label{section:irreducible_IPR_fullerenes}

\subsection{Definitions and preliminaries}

In this section we will classify the irreducible dual IPR fullerenes using the concept of a \textit{cluster}.

A \textit{fullerene patch} is a connected subgraph of a fullerene where all faces except one exterior face are also faces in the fullerene.
Furthermore all boundary vertices have degree 2 or 3 and all non-boundary vertices have degree 3. In the remainder of this article we will abbreviate \textit{``fullerene patch"} as \textit{``patch"}. The \textit{boundary} of a patch is formed by the vertices and edges which are on the unique unbounded face, i.e.\ the \textit{outer face}\index{outer face}.

\begin{definition}[Cluster]\label{def:cluster}
A \textit{$k$-cluster}\index{$k$-cluster} $C$ is a plane graph where all faces except one exterior face are triangles and that has the following properties:

\begin{itemize}
\item All vertices of $C$ have degree at most 6.

\item Vertices which are not on the boundary of $C$ have degree~5 or~6.

\item $C$ contains exactly $k$ vertices with degree 5 which are not on the boundary.

\item No two vertices with degree~5 which are not on the boundary are adjacent.

\item Vertices with degree~5 which are not on the boundary are at distance at least 2 from the boundary.

\item Between any two vertices $a,b$ of $C$ which have degree~5 and which are not on the boundary, there is a path $P$ from $a$ to $b$ so that each edge on $P$ contains exactly one vertex with degree~5 which is not in the boundary.

\item No subgraph of $C$ is a $k$-cluster.

\end{itemize}

\end{definition}

A $k$-cluster for which $k$ is not specified is sometimes just called a \textit{cluster}.  We also assign a colour to the vertices of a cluster: vertices which are on the boundary of the cluster have colour 6 and the colour of the vertices which are not on the boundary is equal to their degree. We also call a vertex with colour 5 a 5-vertex and a vertex with colour 6 a 6-vertex.

We say that a dual fullerene $G$ \textit{contains} a cluster $C$ if and only if $C$ is a subgraph of $G$ and every vertex on the boundary of $C$ has degree 6 in $G$. 

\begin{definition}[Locally reducible cluster]
A cluster is \textit{locally reducible} if there exists an $L$ or $B$-reduction where the active vertices of the reduction are part of the cluster such that the reduced cluster does not contain any adjacent 5-vertices.
\end{definition}

Note that the reduced cluster is not necessarily a cluster. Clusters which are not locally reducible are called \textit{irreducible}.

Lemmas~\ref{obs:reduction_ipr_5_vert} and~\ref{obs:reduction_ipr_6_vert} are useful for the proof of Lemma~\ref{lemma:locally_reducible}.

\begin{lemma} \label{obs:reduction_ipr_5_vert}
Consider a dual fullerene $G$ and a reduction. If $v,w \in V(G)$ are at distance $d$ in $G$ and neither $v$ nor $w$ are active vertices of the reduction, then $v$ and $w$ are at distance at least $d - \bigl\lfloor \frac{d + 1}{3} \bigr\rfloor$ in the reduced dual fullerene.
\end{lemma}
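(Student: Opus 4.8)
The plan is to understand how a single $L$ or $B$ reduction changes distances, and then to bound the worst-case contraction over a shortest path. The key observation is that an $L$ or $B$ reduction is a local patch replacement: outside a bounded region (the patch being replaced) the graph is untouched, so distances between vertices lying outside the patch are unchanged. The only way distance can decrease is when a shortest $v$--$w$ path passes through the patch, and there the old segment of the path gets replaced by a shorter segment through the new (smaller) patch. So I would first record, by direct inspection of the $L_i$ and $B_{i,j}$ operations in dual form (Figure~\ref{fig:fullerene_operations-dual}), the precise ``compression ratio'': how short a path can become after one reduction, measured against its original length. Since the reduction replaces a patch by one with the path-lengths between its pentagons reduced by a bounded amount, a path of length $d$ that enters and leaves the patch loses at most a controlled fraction of its length; the extremal case is the one giving the $\bigl\lfloor \frac{d+1}{3}\bigr\rfloor$ bound.

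Concretely, I would argue as follows. Fix a shortest $v$--$w$ path $P$ of length $d$ in $G$. Decompose $P$ into maximal subpaths that lie inside the replaced patch and subpaths that lie outside it. The outside subpaths survive unchanged. For each inside subpath, whose endpoints are on the boundary of the patch, I replace it by a path through the reduced patch between the same two boundary vertices; by case analysis on the (finitely many shapes of) $L$ and $B$ patches, such a subpath of length $\ell$ can be replaced by one of length at least $\ell - \lfloor \frac{\ell+1}{3}\rfloor$ — essentially because the reduced patch is ``two-thirds as wide'' in the relevant direction, and one cannot shortcut across it faster than that. Here the hypothesis that $v$ and $w$ are not active vertices is what guarantees that the endpoints of $P$ genuinely lie outside (or on the boundary of) the modified region, so that the whole of $P$'s length is accounted for by pieces each of which either survives or contracts by the stated ratio. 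Summing the per-piece bounds and using superadditivity of $x \mapsto x - \lfloor\frac{x+1}{3}\rfloor$ (more precisely, that contracting each piece and then concatenating is no worse than contracting the total length $d$) yields a $v$--$w$ walk of length at least $d - \lfloor\frac{d+1}{3}\rfloor$ in the reduced fullerene, hence that distance bound.

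The step I expect to be the main obstacle is the case analysis establishing the single-piece compression bound $\ell \mapsto \ell - \lfloor\frac{\ell+1}{3}\rfloor$ for each of the $L_i$ and $B_{i,j}$ patch shapes, including mirror images and the various parity cases in the path-length parameters $i,j$: one must check that no clever route through the reduced patch beats the factor $2/3$, and simultaneously that boundary vertices of the patch are not themselves shortened ``against each other'' by more than this. A secondary subtlety is the bookkeeping when $P$ enters and leaves the patch several times, or runs partly along the patch boundary; one should note that boundary vertices of the old and new patches are identified, so those portions of $P$ are common to both graphs and contribute no contraction, which keeps the summation honest. Once the per-piece bound is in hand, assembling it into the global statement is routine, relying only on the monotonicity/superadditivity arithmetic mentioned above.
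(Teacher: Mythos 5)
There is a genuine gap, and it is a matter of logical direction rather than of missing case analysis. You fix a shortest $v$--$w$ path $P$ of length $d$ in the \emph{original} graph $G$, modify its pieces inside the patch, and obtain a walk in the reduced graph $G'$ whose length you bound from below; you then conclude ``hence that distance bound.'' But exhibiting one walk in $G'$ of length at least $d - \bigl\lfloor \frac{d+1}{3} \bigr\rfloor$ says nothing about the distance in $G'$: the actual shortest $v$--$w$ path in $G'$ need not arise from $P$ at all, and may cut through the new patch in a way unrelated to how $P$ traversed the old one. Pushing a path forward from $G$ to $G'$ can only ever \emph{upper}-bound the new distance, whereas the lemma asserts a \emph{lower} bound. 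Your per-piece claim (``a subpath of length $\ell$ can be replaced by one of length at least $\ell - \lfloor\frac{\ell+1}{3}\rfloor$'') inherits the same problem: a replacement path of prescribed length does not constrain the shortest route through the reduced patch, which is exactly what must be controlled.

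The correct argument runs in the opposite direction, which is what the paper does. Take a shortest $v$--$w$ path $P$ in the \emph{reduced} graph $G'$, of length $d'$, and lift it back to $G$: every edge of $P$ that is a non-boundary edge of the new (smaller) patch can be replaced by two edges of the old (larger) patch, and all other edges of $P$ survive unchanged, so one gets a $v$--$w$ walk in $G$ of length at most $d'$ plus the number of such special edges. One then bounds that number by $\bigl\lceil\frac{d'+1}{2}\bigr\rceil$ (no two of them can be consecutive on $P$, or $P$ could be shortened), giving $d \le d' + \bigl\lceil\frac{d'+1}{2}\bigr\rceil$, which rearranges to the stated bound. If you want to repair your write-up, the fix is to discard the push-forward of a $G$-path entirely and instead carry out this pull-back of a shortest $G'$-path, verifying from the dual form of the $L_i$ and $B_{i,j}$ operations that each new non-boundary edge lifts to two old edges and that consecutive special edges allow a shortcut; the hypothesis that $v,w$ are not active vertices is then used to ensure the lifted walk has the same endpoints.
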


\begin{proof}
%Each time the reduction crosses the shortest path $P$ between $v$ and $w$, the distance between $v$ and $w$ decreases by at most one. All vertices involved in a reduction of a dual fullerene have to be distinct. Thus each time the reduction crosses $P$, 3 vertices of $P$ become part of the reduction patch as can be seen in Figure~\ref{fig:fullerene_ipr_proof_disjoint}. So if the reduction crosses $P$ $k$ times, $P$ must contain at least $3k$ vertices. So a reduction can cross, enter or leave a path of length $d$ at most $\lfloor \frac{d + 1}{3} \rfloor$ times. An example for $d=5$ is given in Figure~\ref{fig:fullerene_ipr_proof_disjoint}.
%\bdm{As explained in email. I'm not sure Figure~\ref{fig:fullerene_ipr_proof_disjoint} is suitable any more.}
%
Let $P$ be a shortest path from $v$ to $w$ after the reduction, and let $d'$ be its length.
$P$ may use the non-boundary edges of the new (smaller) patch, but not more than $\bigl\lceil\frac{d'+1}2\bigr\rceil$ of them, since otherwise two would be adjacent and $P$ could be shortened.  Each such non-boundary edge can be replaced by two edges of the old (larger) patch to form a walk from $v$ to $w$ of length $d\le d'+\bigl\lceil\frac{d'+1}2\bigr\rceil$ before the reduction. This inequality is equivalent to the one required.
\end{proof}

%\begin{figure}[h!t]
%	\centering
%	\resizebox{0.55\textwidth}{!}{\input{figures/proof_lemma_disjoint.pdf_t}}
%	\caption{Example for Lemma~\ref{obs:reduction_ipr_5_vert} with $d = 5$. The black vertices are 6-vertices and the dashed vertices can be either 5- or 6-vertices. Here the reduction crosses the shortest path between $v$ and $w$ two times.}
%	\label{fig:fullerene_ipr_proof_disjoint}
%\end{figure}

\begin{lemma} \label{obs:reduction_ipr_6_vert}
Consider a dual fullerene $G$ and a reduction. If $v,w \in V(G)$ are at distance $d$ in $G$ and $v$ is a 6-vertex which becomes a 5-vertex after reduction and $w$ is not an active vertex of the reduction, then $v$ and $w$ are at distance at least $d - \bigl\lfloor \frac{d}{3} \bigr\rfloor$ in the reduced dual fullerene.
\end{lemma}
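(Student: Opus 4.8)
The plan is to follow the template of the proof of Lemma~\ref{obs:reduction_ipr_5_vert} and then squeeze one extra edge out of the estimate using the special role of $v$. So I would again let $P$ be a shortest path from $v$ to $w$ in the reduced dual fullerene, say of length $d'$, and lift it to a $v$--$w$ walk in $G$: every edge of $P$ that is not a non-boundary edge of the new (smaller) patch already exists in $G$, and every non-boundary edge of the new patch occurring on $P$ can be replaced by a path of length $2$ in the old (larger) patch. Hence, writing $e$ for the number of non-boundary edges of the new patch occurring on $P$, we get $d \le d' + e$. The hypothesis that $w$ is not active is used here to ensure that $w$ is literally the same vertex in $G$ and in the reduced dual fullerene, so that the lifting behaves at the $w$-end exactly as in Lemma~\ref{obs:reduction_ipr_5_vert}.

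The only real difference with Lemma~\ref{obs:reduction_ipr_5_vert} is then to bound $e$ more tightly. As in that lemma, no two of the replaced edges can be consecutive on $P$, since otherwise the two non-boundary patch edges involved would be adjacent and $P$ could be shortened. The extra ingredient I would add is that no replaced edge is incident with $v$: because $v$ has degree $6$ in $G$ but degree $5$ in the reduced dual fullerene, $v$ occupies a specific position with respect to the new patch (on its boundary, or otherwise such that the first edge of $P$ leaving $v$ is not a non-boundary edge of the new patch), so the first edge of $P$ is never one of the replaced edges. Combining ``no two replaced edges consecutive'' with ``no replaced edge at $v$'' forces the replaced edges to form an independent set among the last $d'-1$ edges of $P$, whence $e \le \bigl\lceil \tfrac{d'-1}{2} \bigr\rceil = \bigl\lfloor \tfrac{d'}{2} \bigr\rfloor$. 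Substituting gives $d \le d' + \bigl\lfloor \tfrac{d'}{2} \bigr\rfloor = \bigl\lfloor \tfrac{3d'}{2} \bigr\rfloor$, which rearranges to the claimed bound $d' \ge d - \bigl\lfloor \tfrac{d}{3} \bigr\rfloor$.

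I expect the main obstacle to be justifying the claim that no replaced edge is incident with $v$. This requires pinning down, for each of the $L$ and $B$ reductions, exactly where a vertex that changes from degree $6$ to degree $5$ can sit in the new patch, and checking in each of these finitely many local configurations that all of its incident edges in the new patch are boundary edges of the patch (or edges lying outside the patch) rather than non-boundary edges that get doubled in the lift. Once this local inspection is carried out, the remainder is the same bookkeeping as in Lemma~\ref{obs:reduction_ipr_5_vert}, together with the elementary verification that the inequality $d \le \bigl\lfloor \tfrac{3d'}{2} \bigr\rfloor$ is indeed equivalent to $d' \ge d - \bigl\lfloor \tfrac{d}{3} \bigr\rfloor$.
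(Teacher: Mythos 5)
Your proposal is essentially the paper's own proof: the paper simply remarks that the argument of Lemma~\ref{obs:reduction_ipr_5_vert} goes through verbatim ``noting that $v$ is not incident with a non-boundary edge of the (smaller) patch after the reduction,'' which is exactly the extra fact you identify, and your counting ($e \le \bigl\lfloor d'/2 \bigr\rfloor$, hence $d \le \bigl\lfloor 3d'/2 \bigr\rfloor$, hence $d' \ge d - \bigl\lfloor d/3 \bigr\rfloor$) just makes the resulting arithmetic explicit. The local inspection of the $L$ and $B$ patches that you flag as the main obstacle is likewise left as an unelaborated assertion in the paper, so you are not missing anything the authors supply.
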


\begin{proof}
The proof is the same as for the previous lemma, noting that $v$ is not incident with a non-boundary edge of the (smaller) patch after the reduction.
%
%Each time the reduction crosses the shortest path between $v$ and $w$ the distance between $v$ and $w$ decreases by at most one. Since all vertices involved in a reduction of a dual fullerene have to be distinct and since $v$ is involved in the reduction, it can cross a path of length $d$ from $v$ to $w$ at most $\lfloor \frac{d}{3} \rfloor$ times. An example for $d=3$ is given in Figure~\ref{fig:fullerene_ipr_proof_one_disjoint}. 
\end{proof}

%\begin{figure}[h!t]
%	\centering
%	\resizebox{0.55\textwidth}{!}{\input{figures/proof_lemma_one_disjoint.pdf_t}}
%	\caption{Example for Lemma~\ref{obs:reduction_ipr_6_vert} with $d = 3$. The white vertices are 5-vertices, the black vertices are 6-vertices and the dashed vertices can be either. Here the reduction crosses the shortest path between $v$ and $w$ once.}
%	\label{fig:fullerene_ipr_proof_one_disjoint}
%\end{figure}
%

\begin{lemma} \label{lemma:locally_reducible}
A dual IPR fullerene which contains a locally reducible cluster is reducible to a smaller dual IPR fullerene.
\end{lemma}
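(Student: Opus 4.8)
The plan is to show that the local reduction guaranteed by the definition of a locally reducible cluster, when performed inside the ambient dual IPR fullerene $G$, produces a dual fullerene that is again IPR. Let $C$ be a locally reducible cluster contained in $G$, and let $\rho$ be the $L$ or $B$ reduction witnessing local reducibility, so that the active vertices of $\rho$ lie in $C$ and the reduced cluster has no two adjacent $5$-vertices. Write $G'$ for the result of applying $\rho$ to $G$. Since $G$ is a dual fullerene and $\rho$ is one of the reductions of Hasheminezhad et al., $G'$ is again a dual fullerene; the only thing to verify is that $G'$ has no two adjacent $5$-vertices, i.e.\ that $G'$ is IPR.

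The first step is to localise the change: applying $\rho$ only alters the graph inside the patch being replaced, and the active vertices (the two original $5$-vertices and any vertices required to be $6$-vertices) all lie in $C$, which by hypothesis sits inside $G$ with all its boundary vertices having degree~$6$. So every vertex of $G$ whose degree changes, or which moves to a new position relative to its neighbours, is either in $C$ or is a $6$-vertex adjacent to $C$ across the boundary whose degree is unaffected. Consequently, to check IPR-ness of $G'$ it suffices to consider pairs of $5$-vertices of $G'$ at least one of which was affected by $\rho$.

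The second step handles those pairs. There are essentially two cases. First, if both vertices of the pair lie in (the image of) $C$: then the fact that they are adjacent in $G'$ is entirely a statement about the reduced cluster, and the hypothesis that the reduced cluster has no two adjacent $5$-vertices rules this out directly. Second, a $5$-vertex inside the reduced cluster and a $5$-vertex of $G$ outside $C$: here we use that $5$-vertices not on the boundary of $C$ are at distance at least $2$ from the boundary of $C$ (a defining property of a cluster), together with Lemmas~\ref{obs:reduction_ipr_5_vert} and~\ref{obs:reduction_ipr_6_vert} to control how the reduction can shorten distances. A $5$-vertex of $G$ lying outside $C$ is at distance at least~$1$ from the boundary and hence at distance at least~$3$ from any non-boundary $5$-vertex of $C$; one checks that a single $L$ or $B$ reduction cannot bring two such vertices into adjacency, because the distance bounds of the two lemmas show the post-reduction distance stays at least~$2$. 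The case of a newly created $5$-vertex (a former $6$-vertex of $G$ that $\rho$ turns into a $5$-vertex) is the reason Lemma~\ref{obs:reduction_ipr_6_vert} is needed rather than just Lemma~\ref{obs:reduction_ipr_5_vert}.

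The main obstacle, and where care is required, is the bookkeeping in this second case: one has to be sure that \emph{every} $5$-vertex of $G'$ that is not wholly inside the reduced cluster is accounted for, including $5$-vertices of $G$ far from $C$ that are untouched (these trivially remain non-adjacent since their whole neighbourhood is untouched and $G$ was IPR), $5$-vertices created by $\rho$, and $5$-vertices just outside $C$ whose distance to the cluster's $5$-vertices must be tracked through the reduction using the exact floor-function bounds. Pinning down the minimal pre-reduction distance between a non-boundary $5$-vertex of $C$ and an external $5$-vertex — using the ``distance at least $2$ from the boundary'' property plus the boundary vertices all being $6$-vertices in $G$ — and then feeding the right value of $d$ into Lemma~\ref{obs:reduction_ipr_5_vert} or~\ref{obs:reduction_ipr_6_vert}, is the crux of the argument.
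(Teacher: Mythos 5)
Your argument is correct and follows essentially the same route as the paper's proof: adjacency inside the reduced cluster is excluded by the definition of local reducibility, untouched pairs are covered by $G$ being IPR, and the remaining pairs (a surviving cluster 5-vertex, or a 6-vertex of $C$ turned into a 5-vertex, versus an external 5-vertex) are handled by the distance-$\ge 3$ (resp.\ $\ge 2$) bounds from Definition~\ref{def:cluster} fed into Lemmas~\ref{obs:reduction_ipr_5_vert} and~\ref{obs:reduction_ipr_6_vert}. The only detail you leave implicit, which the paper states explicitly, is that a newly created 5-vertex is adjacent to a 5-vertex of $C$ and hence starts at distance at least $2$ from any external 5-vertex, so Lemma~\ref{obs:reduction_ipr_6_vert} gives post-reduction distance at least $2$.
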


\begin{proof}
Consider a dual IPR fullerene $G$ which contains a locally reducible cluster $C$. Let $G'$ be the dual fullerene obtained by applying a reduction from $C$. The only possibility such that $G'$ would not be IPR is that a 5-vertex which is part of $C$ or a 6-vertex of $C$ which becomes a 5-vertex after reduction would be adjacent to a 5-vertex which is not part of the cluster. 

Let $v$ be a 5-vertex of $G$ which is not part of $C$. It follows from Definition~\ref{def:cluster} that 5-vertices which are not part of the cluster, are at distance at least 3 from 5-vertices which are part of the cluster. 

Let $w$ be a 5-vertex which is in $C$ and which is not an active vertex of the reduction. It follows from Lemma~\ref{obs:reduction_ipr_5_vert} that $v$ and $w$ are at distance at least 2 in $G'$.

Now let $w$ be a 6-vertex which becomes a 5-vertex after reduction. Since $w$ is adjacent to a 5-vertex in $C$, it follows from Definition~\ref{def:cluster} that $v$ and $w$ are at distance at least 2 in $G$. Thus it follows from Lemma~\ref{obs:reduction_ipr_6_vert} that $v$ and $w$ are at distance at least 2 in $G'$.

Thus $G'$ does not contain any adjacent 5-vertices.
\end{proof}

Note that if a dual fullerene contains multiple clusters, they are distinct in the sense that for every two clusters in a dual fullerene the set of 5-vertices is disjoint, but they may have some 6-vertices in common.

\subsection{Reducibility of $k$-clusters $(1 \le k \le 6)$}

\begin{lemma} \label{lemma:1_cluster}
All dual IPR fullerenes which contain only 1-clusters are reducible to a smaller dual IPR fullerene.
\end{lemma}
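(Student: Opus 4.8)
The plan is to produce a concrete $L$- or $B$-reduction and then check it stays inside the IPR class; the distance estimates of Lemmas~\ref{obs:reduction_ipr_5_vert} and~\ref{obs:reduction_ipr_6_vert} are the main tools, and the point is that when $G$ has only $1$-clusters its $5$-vertices are so widely spread that those estimates have plenty of slack. Note that Lemma~\ref{lemma:locally_reducible} is of no direct help here: an $L$- or $B$-reduction has two $5$-vertices among its active vertices, whereas a $1$-cluster contains only one, so no $1$-cluster is locally reducible and the hypothesis hands us no locally reducible cluster.

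First I would record what the hypothesis means: $G$ contains only $1$-clusters if and only if any two $5$-vertices of $G$ are at distance at least $3$ in $G$. For the implication we use, suppose two $5$-vertices of $G$ were at distance at most $2$; then neither lies in a $1$-cluster contained in $G$, since the minimal $1$-cluster about a $5$-vertex is its closed $2$-ball, and a second $5$-vertex within distance $2$ would either be an interior $5$-vertex of that ball (a second one) or sit on its boundary, in the latter case violating the degree-$6$ requirement on boundary vertices in the definition of ``contains''. As every $5$-vertex of a dual fullerene lies in some cluster, one of the two would then lie in a cluster with at least two $5$-vertices, contrary to hypothesis. In particular the twelve closed $1$-balls about the $5$-vertices of $G$ are pairwise disjoint, so $G$ has at least $144$ vertices; in any case, being IPR, $G$ is neither $C_{28}(T_d)$ nor a type-$(5,0)$ nanotube fullerene (neither of these is IPR), so by the theorem of Hasheminezhad et al.~\cite{hasheminezhad_08} $G$ admits an $L$- or $B$-reduction to a smaller fullerene $G'$.

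Now I would check, choosing the reduction with some care, that $G'$ has no two adjacent $5$-vertices. Two $5$-vertices of $G$ neither of which is active are handled by Lemma~\ref{obs:reduction_ipr_5_vert}: a distance of at least $3$ in $G$ becomes a distance of at least $3-\lfloor 4/3\rfloor=2$ in $G'$. A vertex that is a $6$-vertex of $G$ but a $5$-vertex of $G'$ is, by inspection of the operation diagrams in Figure~\ref{fig:fullerene_operations-dual}, adjacent to an operation pentagon, hence at distance at least $2$ in $G$ from every other $5$-vertex, and Lemma~\ref{obs:reduction_ipr_6_vert} keeps it at distance at least $2$ in $G'$. The remaining danger involves the two operation pentagons $v,w$, which are active and may move; here I would use the freedom in the reduction to arrange that its patch contains no $5$-vertex besides $v$ and $w$ — taking $v,w$ to be a closest pair of $5$-vertices and the reduction to act on a narrow geodesic strip between them, which matches an $L_i$ or $B_{i,j}$ pattern and, being thin and between a closest pair, cannot contain a third $5$-vertex. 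With such a patch, every other $5$-vertex of $G$ lies strictly outside it, keeps all its incident edges, and hence stays non-adjacent in $G'$ to everything it was non-adjacent to in $G$, in particular to $v$ and to $w$; and $v,w$, being at distance $\ge 3$ in $G$, are not pulled to distance $\le 1$ by a reduction applicable that far apart. Therefore $G'$ is a smaller dual IPR fullerene.

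The step I expect to be the main obstacle is the geometric bookkeeping just invoked: one must verify on the finitely many patterns of Figure~\ref{fig:fullerene_operations-dual} that each $6$-to-$5$ vertex really is adjacent to an operation pentagon, that the narrow strip between a closest pair of $5$-vertices is genuinely an $L_i$ or $B_{i,j}$ patch containing no third $5$-vertex, and that such a reduction creates no further $5$-vertex and does not collapse $d(v,w)$. Everything else is forced: ``only $1$-clusters'' gives a pairwise distance of $3$ where only distance $2$ — non-adjacency — is needed after the reduction, which is exactly the margin built into Lemmas~\ref{obs:reduction_ipr_5_vert} and~\ref{obs:reduction_ipr_6_vert}.
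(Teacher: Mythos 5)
Your overall frame---reduce along a path between a closest pair of 5-vertices and check non-adjacency of 5-vertices in $G'$ via the two distance lemmas---is the same as the paper's, and your first two cases (two non-active 5-vertices; a newly created 5-vertex versus a non-active 5-vertex) are handled exactly as in the paper by Lemmas~\ref{obs:reduction_ipr_5_vert} and~\ref{obs:reduction_ipr_6_vert}. Also, the ``narrow geodesic strip'' step you flag as the main obstacle is not really one: the paper simply cites the result of~\cite{hasheminezhad_08} that in a dual IPR fullerene at least one shortest path between any two 5-vertices carries a valid $L$- or $B$-reduction, so that part is available off the shelf.

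The genuine gap is in your third case. After an $L$- or $B$-reduction the two operation pentagons $v,w$ are no longer 5-vertices; the dangerous pair is the two \emph{new} 5-vertices $a',b'$, i.e.\ the 6-vertices adjacent to $v$ and $w$ that the reduction converts. Both are active vertices, so neither Lemma~\ref{obs:reduction_ipr_5_vert} nor Lemma~\ref{obs:reduction_ipr_6_vert} applies to the pair $(a',b')$, and your closing assertion that $v,w$ ``are not pulled to distance $\le 1$'' is about the wrong vertices and is unsubstantiated when transferred to $a',b'$. It can in fact fail at the margin your hypothesis permits: with $d(v,w)=3$, reducing along a shortest path only gives $d(a',b')\ge d-2=1$ in $G'$, so $a'$ and $b'$ may end up adjacent; the paper explicitly concedes this and closes the case by a separate topological argument, namely that adjacency of $a'$ and $b'$ would force a non-trivial cyclic 5-edge-cut in $G'$, making $G'$ a type-(5,0) nanotube, which is incompatible with $G$ being IPR. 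Without this (or some substitute), your proof does not go through. (Minor, non-load-bearing slip: twelve disjoint closed 1-balls in the dual give at least $72$ dual vertices, hence a fullerene on at least $140$ vertices, not $144$; nothing in your argument uses this.)
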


\begin{proof}
In~\cite{hasheminezhad_08} it was proven that in a dual IPR fullerene, at least one shortest path between any two 5-vertices forms a valid $L$ or $B$-reduction (not necessarily to a dual IPR fullerene). Each cluster contains one 5-vertex, thus all vertices at distance at most 2 from each 5-vertex are 6-vertices. 

Consider a dual IPR fullerene $G$ which contains only 1-clusters. Let $G'$ be the graph obtained by applying the shortest reduction between two 5-vertices $a,b \in V(G)$. Let $a'$ (respectively $b'$) be the 6-vertex in $G$ which is adjacent to $a$ (respectively $b$) which is transformed into a 5-vertex by the reduction.
It follows from Lemma~\ref{obs:reduction_ipr_5_vert} that the distance in $G'$ between 5-vertices which were not involved in the reduction is at least 2.
It follows from Lemma~\ref{obs:reduction_ipr_6_vert} that the distance in $G'$ between $a'$ (or $b'$) and a 5-vertex which is not modified by the reduction is at least 2.

Suppose $a$ and $b$ are at distance $d$ in $G$. Note that $d$ is at least 3 since $a$ and $b$ lie in different clusters. Since we performed the shortest reduction between $a$ and $b$, $a'$ and $b'$ are at distance at least $d - 2$ in $G'$. If $d > 3$ there is not a problem. If $d = 3$, $a'$ and $b'$ could be at distance 1 in $G'$. However this would imply that $G'$ has a non-trivial cyclic 5-edge-cut and is thus a type-(5,0) nanotube (see~\cite{thesis_jan} for details). But this is not possible since $G$ is IPR. Thus $G'$ is a dual IPR fullerene.
\end{proof}

Using an algorithm that generates all $k$-clusters for given $k$ (see \cite{thesis_jan} for details), we tested all $k$-clusters for local reducibility. We obtained the following results:

\begin{observation} \label{lemma:235_cluster}
All $k$-clusters with $k \in \{2, 3, 5\}$ are locally reducible.
\end{observation}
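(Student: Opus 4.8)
The plan is to prove this by an exhaustive computer search over all $k$-clusters, so the substantive part of the argument is to show that this search space is finite and then to organise the check. First I would argue that, for each fixed $k$, there are only finitely many $k$-clusters up to isomorphism. Every vertex of a cluster has degree at most~$6$, so it suffices to bound the diameter by a function of~$k$. By the path condition in Definition~\ref{def:cluster}, every edge of the connecting path $P$ between two non-boundary $5$-vertices has exactly one non-boundary $5$-vertex as an endpoint, so the vertices of $P$ strictly alternate between non-boundary $5$-vertices and other vertices; since $P$ starts and ends at such a vertex it has even length, and at most $k$ of its vertices are non-boundary $5$-vertices, forcing its length to be at most $2(k-1)$. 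Hence the non-boundary $5$-vertices lie pairwise within distance $2(k-1)$. The distance-to-boundary condition keeps these vertices at distance at least~$2$ from the boundary, and the minimality clause (``no subgraph of $C$ is a $k$-cluster'') is designed precisely to prevent $C$ from containing any triangulated region that could be deleted while retaining all the cluster properties; together these confine $C$ to a bounded-radius neighbourhood of its $5$-vertices, bounding the diameter and hence $|V(C)|$. The generator for all $k$-clusters described in \cite{thesis_jan} then produces a complete finite list.

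Second, for each cluster $C$ on that list I would enumerate the candidate reductions, namely the $L$- and $B$-reductions of Figure~\ref{fig:fullerene_operations-dual} whose active vertices all lie in $C$; because $C$ has only $k$ non-boundary $5$-vertices and bounded size, there are finitely many of these. Each candidate is tested mechanically: perform the corresponding patch replacement inside $C$ and check whether the resulting triangulation contains two adjacent $5$-vertices. If some candidate survives this test, then $C$ is \emph{locally reducible} by definition. The assertion of the Observation is that, after running this procedure for every cluster with $k \in \{2,3,5\}$, a surviving candidate is found in every case.

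The main obstacle is not mathematical depth but establishing trust in the exhaustive search. One must be sure that the cluster generator emits every $k$-cluster (the finiteness bound above is what certifies that the search terminates with a complete list) and that the reduction code faithfully implements the $L$ and $B$ operations. I would address this in the usual way: report the (small) number of $k$-clusters found for each $k$, cross-check these counts and the reducibility outcomes against an independent implementation, and note that for $k \le 5$ the clusters are small enough that individual reductions can be verified by hand.
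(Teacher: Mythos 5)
Your proposal takes essentially the same route as the paper: the Observation is established purely computationally, by generating all $k$-clusters for $k\in\{2,3,5\}$ with a dedicated cluster generator and mechanically testing each one for an $L$- or $B$-reduction whose active vertices lie in the cluster and which leaves no adjacent 5-vertices. The finiteness-of-the-search-space argument you sketch (alternating paths of length at most $2(k-1)$ plus minimality) is exactly the justification the paper defers to the cited thesis~\cite{thesis_jan}, so the two treatments agree in substance.
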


Applying Lemma~\ref{lemma:locally_reducible} to Observation~\ref{lemma:235_cluster} gives us the following corollary:

\begin{corollary}
Every dual IPR fullerene which contains a $k$-cluster $(k \in \{2, 3, 5\})$ is reducible to a smaller dual IPR fullerene.
\end{corollary}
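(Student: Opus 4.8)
The plan is simply to chain the two immediately preceding results, since the corollary is stated as a direct consequence of them. Let $G$ be a dual IPR fullerene that contains a $k$-cluster $C$ with $k \in \{2,3,5\}$. By Observation~\ref{lemma:235_cluster}, every such cluster is locally reducible, so in particular $C$ is. Thus $G$ is a dual IPR fullerene containing a locally reducible cluster, and Lemma~\ref{lemma:locally_reducible} immediately gives that $G$ can be reduced to a smaller dual IPR fullerene. That is the whole argument.

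The only points worth spelling out are that the hypotheses line up cleanly. Lemma~\ref{lemma:locally_reducible} assumes that the boundary vertices of the cluster have degree $6$ in the ambient dual fullerene, but this is exactly the meaning of saying that a dual fullerene \emph{contains} a cluster, so the hypothesis transfers for free. Also, ``locally reducible'' only asserts that the \emph{reduced cluster} has no adjacent $5$-vertices; the substance of Lemma~\ref{lemma:locally_reducible} is that, via the distance estimates of Lemmas~\ref{obs:reduction_ipr_5_vert} and~\ref{obs:reduction_ipr_6_vert} combined with the spacing conditions in Definition~\ref{def:cluster}, this local no-adjacent-pentagons property propagates to the whole fullerene, so that the reduced $G'$ is genuinely IPR.

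There is no real obstacle in the corollary itself: everything has already been established. The difficulty lies entirely upstream — in Observation~\ref{lemma:235_cluster}, which relies on an exhaustive computer enumeration of all $2$-, $3$-, and $5$-clusters together with a local-reducibility check for each, and in Lemma~\ref{lemma:locally_reducible}. Given those cited results, the corollary is a one-line deduction, and I would present it as such rather than attempting any self-contained reworking of the enumeration.
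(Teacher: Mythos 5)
Your proposal is correct and matches the paper exactly: the paper derives this corollary by applying Lemma~\ref{lemma:locally_reducible} directly to Observation~\ref{lemma:235_cluster}, just as you do, with all the real work residing in those two cited results.
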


\begin{observation}
There is exactly one 4-cluster which is not locally reducible.
\end{observation}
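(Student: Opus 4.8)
The plan is to proceed by exhaustive computation, since the preceding "Observation" about $\{2,3,5\}$-clusters was already established that way. First I would invoke the algorithm mentioned in the text that generates, for a given $k$, all $k$-clusters up to isomorphism (the same tool cited to \cite{thesis_jan}). For $k=4$ this is a finite list: the defining properties of Definition~\ref{def:cluster} — bounded degree, the four interior $5$-vertices pairwise non-adjacent and at distance $\ge 2$ from the boundary, the connecting-path condition, and above all the minimality clause "no subgraph of $C$ is a $k$-cluster" — force the number of vertices to be bounded, so only finitely many candidates arise. I would then test each candidate for local reducibility by checking, over all ways of placing an $L$-reduction (and its mirror image) or a $B$-reduction with active vertices inside the cluster, whether some reduction destroys all adjacent pairs of $5$-vertices. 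Every $4$-cluster but one passes this test; the exceptional one is exhibited explicitly (a picture in the paper), and for it one verifies directly that no admissible $L$ or $B$ reduction with active vertices in the cluster avoids creating adjacent $5$-vertices.

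The key steps, in order, are: (1) enumerate all $4$-clusters; (2) for each, enumerate all candidate reductions whose active vertices lie in the cluster and whose two $5$-vertices are interior $5$-vertices of the cluster at the appropriate distance; (3) for each such reduction, form the reduced cluster and check for adjacent $5$-vertices among the (possibly relabelled) interior $5$-vertices, remembering via Lemmas~\ref{obs:reduction_ipr_5_vert} and~\ref{obs:reduction_ipr_6_vert} that only the cluster's own $5$-vertices and the $6$-vertices promoted to degree $5$ can be problematic; (4) conclude local reducibility if some reduction works; (5) observe that exactly one cluster survives all these tests, and display it.

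The main obstacle is making step (1) rigorous: one must argue that the list of $4$-clusters produced by the enumeration algorithm is genuinely complete and that the class is finite. This rests on the minimality condition in Definition~\ref{def:cluster} together with the path condition forcing the four $5$-vertices to lie in a tightly controlled configuration, which caps the diameter and hence the size of any $4$-cluster; the details of the enumeration algorithm and its termination are deferred to \cite{thesis_jan}. A secondary concern is that "locally reducible" only asks that the reduced cluster have no adjacent $5$-vertices, not that it be a cluster — so in step (3) I do not need to re-verify the cluster axioms after reduction, which keeps the check purely local, exactly as Lemma~\ref{lemma:locally_reducible} is set up to exploit when this observation is combined with it later.
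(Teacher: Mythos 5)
Your proposal matches the paper's approach: the paper establishes this Observation purely computationally, by running the $k$-cluster generator cited to \cite{thesis_jan} for $k=4$ and testing every generated cluster for local reducibility against all admissible $L$ and $B$ reductions, exactly as you describe, with the completeness and termination of the enumeration likewise deferred to the thesis. Your remark that the reduced object need not satisfy the cluster axioms is also exactly the paper's own caveat, so nothing further is needed.
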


\begin{figure}[h!t]
	\centering
	\includegraphics[width=0.45\textwidth]{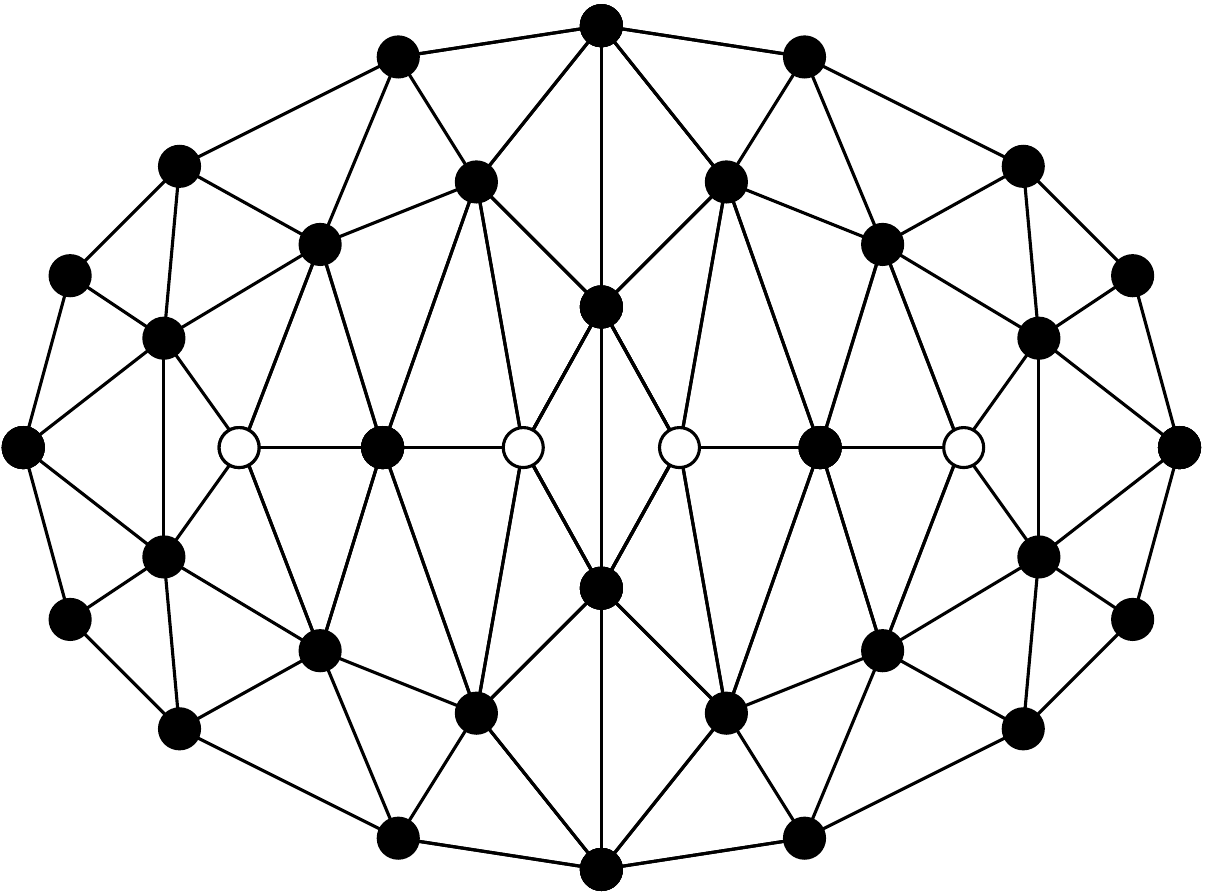}
	\caption{A locally irreducible 4-cluster.}
	\label{fig:irred_4-cluster}
\end{figure}

This cluster is depicted in Figure~\ref{fig:irred_4-cluster}. The four 5-vertices are white and the other vertices are 6-vertices. 
Every dual IPR fullerene which contains this cluster has a $B_{2,2}$-reduction to a smaller dual IPR fullerene
unless the vertex $x$ displayed in Figure~\ref{fig:irred_4-cluster-B_red} is a 5-vertex. The path of vertices which is going to be reduced by the $B_{2,2}$-reduction is drawn with dashed edges (assuming $x$ is not a 5-vertex). In principle $x$ can be a vertex which is part of the cluster, but this is not a problem for the reduction. If $x$ is a 5-vertex, there is an $L_2$-reduction which yields a dual IPR fullerene. This is shown in Figure~\ref{fig:irred_4-cluster-L_red}. The reduced dual fullerene is IPR since $y$ is a 6-vertex, otherwise the dual fullerene before reduction was not IPR. 
In principle $y$ might be identical to one of the vertices which is part of the cluster. This gives us the following corollary:

\begin{corollary} \label{lemma:4_cluster}
Every dual IPR fullerene which contains a 4-cluster is reducible to a smaller dual IPR fullerene.
\end{corollary}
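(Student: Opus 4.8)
The plan is to combine the preceding structural results with an explicit analysis of one configuration. Suppose $G$ is a dual IPR fullerene containing a 4-cluster $C$. By the Observation that exactly one 4-cluster fails to be locally reducible, either $C$ is locally reducible --- in which case Lemma~\ref{lemma:locally_reducible} immediately produces a reduction of $G$ to a smaller dual IPR fullerene --- or $C$ is, up to isomorphism, the cluster of Figure~\ref{fig:irred_4-cluster}, whose four non-boundary 5-vertices are white and all of whose remaining vertices are 6-vertices. So from here on I would assume the latter and exhibit a suitable reduction by hand.

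I would then split according to the colour in $G$ of the vertex $x$ marked in Figure~\ref{fig:irred_4-cluster-B_red}. If $x$ is a 6-vertex, the dashed path of that figure should be the active path of a valid $B_{2,2}$-reduction of $G$; if $x$ is a 5-vertex, the $L_2$-reduction of Figure~\ref{fig:irred_4-cluster-L_red} should apply, where one first observes that the vertex $y$ there is forced to be a 6-vertex, since $y$ is adjacent in $G$ to a 5-vertex of $C$ and $G$ is IPR. In both cases one must check that the reduction remains well-defined even though $x$ (resp.\ $y$) is permitted to coincide with a vertex of $C$, and then that the reduced dual fullerene $G'$ is again IPR. For the IPR check I would argue exactly as in the proof of Lemma~\ref{lemma:locally_reducible}: the only 5-vertices of $G'$ that could be brought too close to another 5-vertex are the 5-vertices of $C$ not active in the reduction and the 6-vertices of $C$ that the reduction turns into 5-vertices, and Lemmas~\ref{obs:reduction_ipr_5_vert} and~\ref{obs:reduction_ipr_6_vert}, together with the distance-$\ge 3$ separation (from Definition~\ref{def:cluster}, using that every boundary vertex of $C$ has degree $6$ in $G$) between a 5-vertex outside $C$ and a 5-vertex of $C$, keep all of these at distance $\ge 2$ from every other 5-vertex. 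Since the two cases are exhaustive, $G$ reduces to a smaller dual IPR fullerene in every case.

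The step I expect to be the main obstacle is the IPR-preservation check: systematically ruling out every way the chosen $B_{2,2}$- or $L_2$-reduction might create a pair of adjacent 5-vertices, or otherwise yield an illegal graph (a multi-edge, or a short non-facial cycle forcing a type-$(5,0)$ nanotube, as had to be excluded in the proof of Lemma~\ref{lemma:1_cluster}). This amounts to a finite but delicate inspection of the neighbourhood of $C$ in $G$, organised around the quantitative distance estimates of Lemmas~\ref{obs:reduction_ipr_5_vert} and~\ref{obs:reduction_ipr_6_vert}, with the degenerate possibilities that $x$ or $y$ lies inside $C$ treated explicitly.
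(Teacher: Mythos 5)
Your proposal follows the paper's own argument essentially verbatim: dispose of locally reducible 4-clusters via Lemma~\ref{lemma:locally_reducible}, then handle the unique locally irreducible 4-cluster by the case split on $x$, applying the $B_{2,2}$-reduction of Figure~\ref{fig:irred_4-cluster-B_red} when $x$ is a 6-vertex and the $L_2$-reduction of Figure~\ref{fig:irred_4-cluster-L_red} (with $y$ forced to be a 6-vertex by IPR) when $x$ is a 5-vertex, noting that $x$ or $y$ may coincide with cluster vertices. This matches the paper's reasoning, including the use of Lemmas~\ref{obs:reduction_ipr_5_vert} and~\ref{obs:reduction_ipr_6_vert} for the IPR-preservation check.
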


\begin{figure}[h!t]
    \centering
    \subfloat[]{\label{fig:irred_4-cluster-B_red}\includegraphics[width=0.4\textwidth]{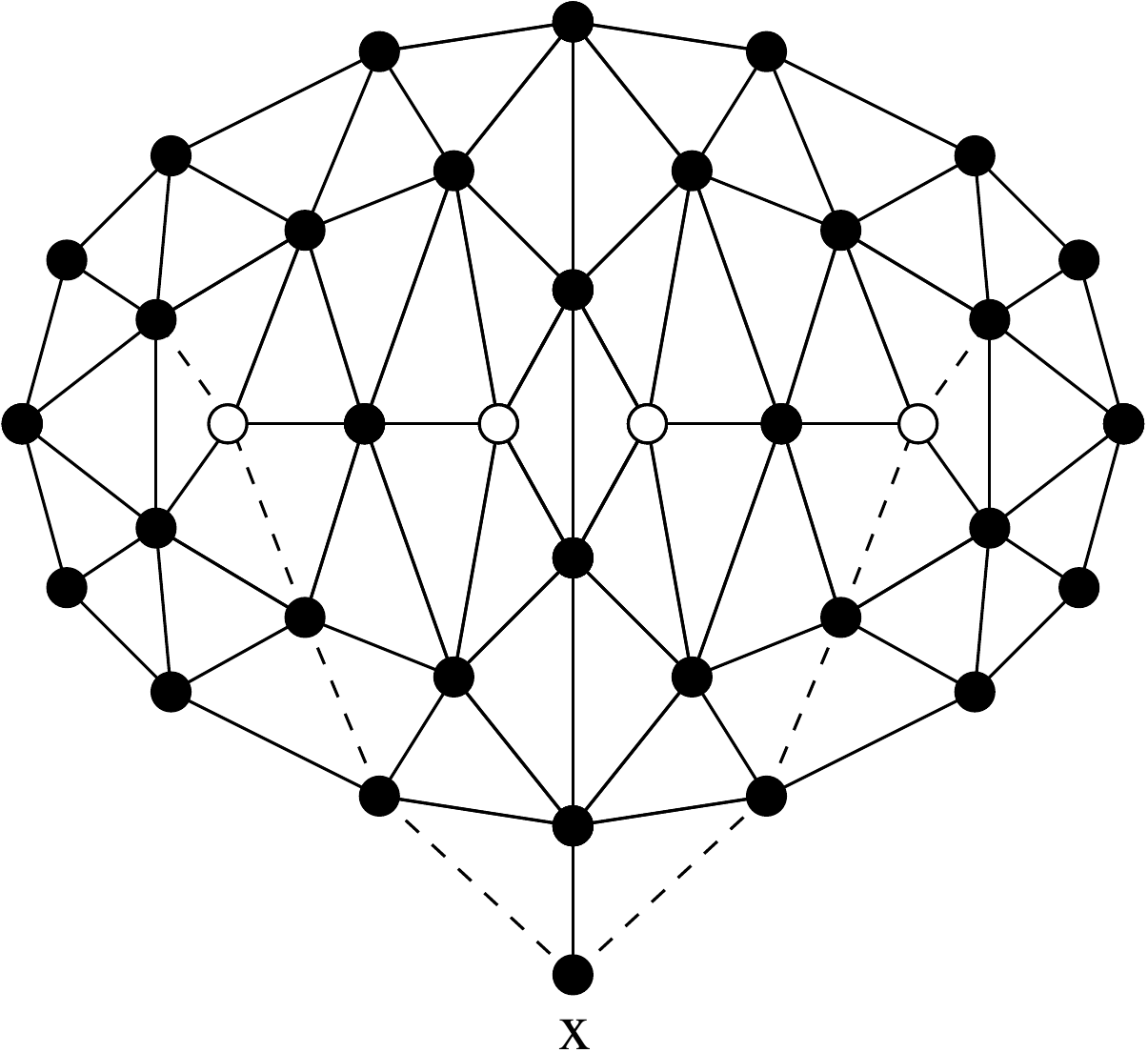}}\qquad\ %\qquad
    \subfloat[]{\label{fig:irred_4-cluster-L_red}\includegraphics[width=0.4\textwidth]{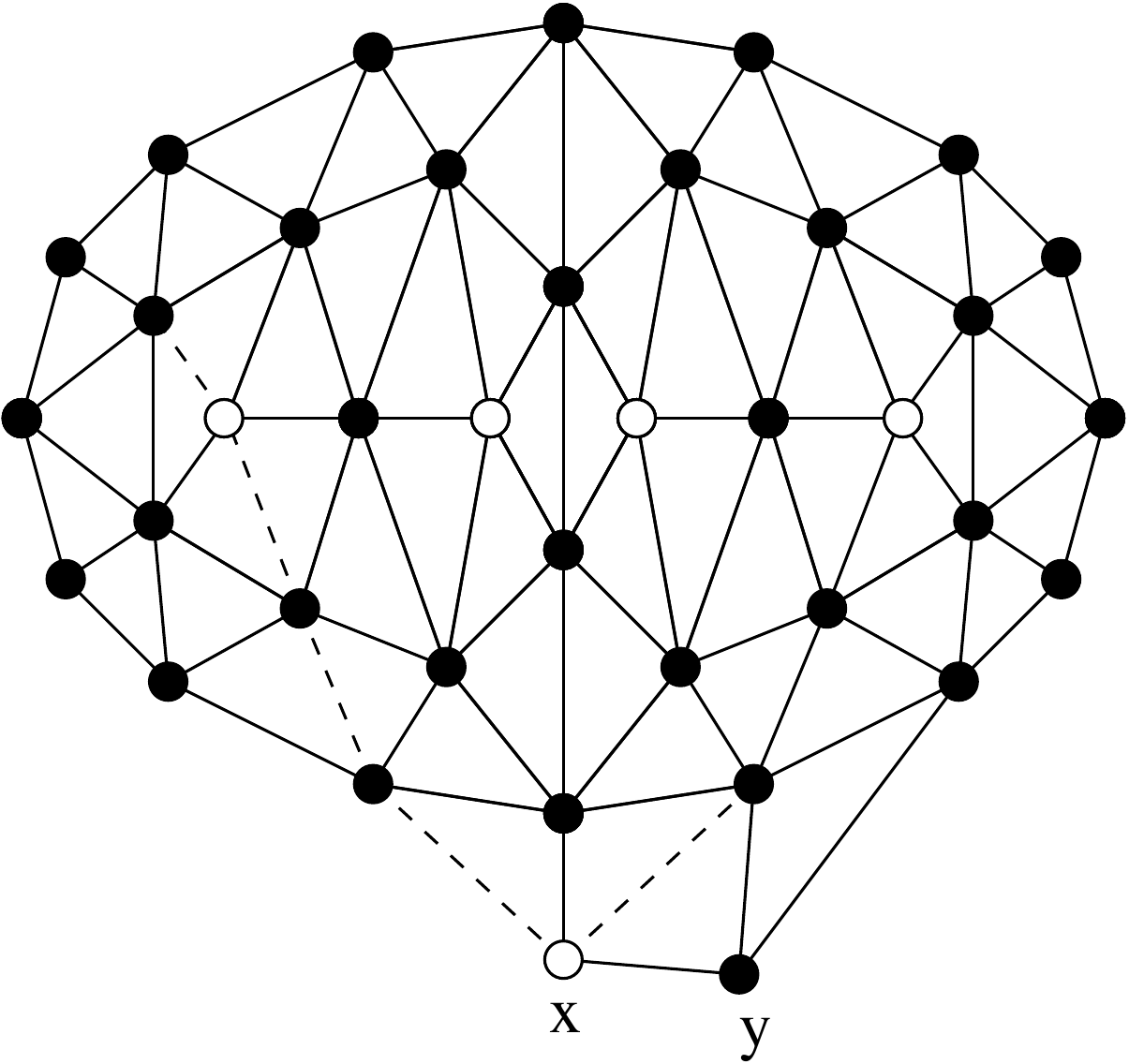}}
    \caption{A locally irreducible 4-cluster which has a $B_{2,2}$-reduction (i.e.~Figure~\ref{fig:irred_4-cluster-B_red}) or an $L_2$-reduction (i.e.~Figure~\ref{fig:irred_4-cluster-L_red}).}
\end{figure}

Using the generator for $k$-clusters we also obtained the following result:

\begin{observation}
There are exactly six 6-clusters which are not locally reducible.
\end{observation}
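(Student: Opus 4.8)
The plan is to establish this exactly as Observation~\ref{lemma:235_cluster} was established: by exhaustively generating all $6$-clusters up to isomorphism and testing each one for local reducibility. The first step is to argue that the search terminates, i.e.\ that there are only finitely many $k$-clusters for each fixed $k$. This follows from the axioms of Definition~\ref{def:cluster}: a non-boundary $5$-vertex together with everything within distance~$2$ of it forms a triangulated disc of bounded size; the minimality condition (no proper subgraph is a $k$-cluster) forbids any material not forced by the $k$ five-vertices and the paths joining them; and for any two non-boundary $5$-vertices one may take a \emph{shortest} admissible connecting path, whose length is bounded because consecutive $5$-vertices along it are non-adjacent and a cluster containing two far-apart $5$-vertices would properly contain a smaller one. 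Hence, up to isomorphism, the set $\mathcal{C}_6$ of all $6$-clusters is finite and can be produced by computer.

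Next I would invoke the $k$-cluster generator referred to in the text (details in \cite{thesis_jan}): starting from a single non-boundary $5$-vertex and its forced distance-$2$ triangulated neighbourhood, one repeatedly attaches a further non-boundary $5$-vertex along an admissible connecting path, at each stage checking the cluster axioms and pruning, with isomorph-free output guaranteed by canonical-form rejection. Running this with $k=6$ yields the finite list $\mathcal{C}_6$.

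The core step is the reducibility test. For each $C \in \mathcal{C}_6$ I would enumerate every way an $L$- or $B$-reduction can act with all of its active vertices inside $C$: this amounts to matching the pre-reduction pattern of Figure~\ref{fig:fullerene_operations-dual} into $C$ (for $L$, in both orientations), where the white vertices of the pattern must be matched to non-boundary $5$-vertices of $C$, the vertices required to have degree~$6$ may be matched to any $6$-coloured vertices of $C$ including boundary vertices, and the reduction path may have any length — bounded by the diameter of $C$, so that only finitely many matchings arise. For each matching I perform the patch replacement and check whether the resulting plane triangulation contains two adjacent $5$-coloured vertices, bearing in mind that the $6$-vertices flanking the pentagons of the operation become $5$-vertices (cf.\ the discussion around Corollary~\ref{lemma:4_cluster}). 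The cluster $C$ is \emph{locally reducible} iff at least one matching yields a reduced graph with no adjacent $5$-vertices. Performing this test over all of $\mathcal{C}_6$, exactly six clusters fail it, and these are the six locally irreducible $6$-clusters claimed.

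I expect the main obstacle to be completeness rather than the mechanical checking. One must be sure that the generator really produces every $6$-cluster — which rests on the finiteness bound above being correct and on the canonical-form rejection being sound — and that the matching of reduction patterns is genuinely exhaustive, in particular allowing a pattern to ``wrap around'' a small cluster so that the vertices playing the role of $x$ and $y$ in the $4$-cluster case may be identified with cluster or boundary vertices, subject only to the embedding remaining plane. The remainder is routine bookkeeping, which I would moreover corroborate with an independent implementation.
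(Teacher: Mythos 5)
Your plan is essentially the paper's own argument: the paper establishes this observation purely computationally, by running the $k$-cluster generator (detailed in~\cite{thesis_jan}) for $k=6$ and testing every generated cluster for local reducibility by exhaustively matching $L$- and $B$-reduction patterns inside it, which is exactly the procedure you describe (your finiteness discussion is left implicit in the paper, being built into the generator). So the proposal is correct and takes the same route; no further comparison is needed.
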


\begin{figure}[h!t]
	\centering
	\includegraphics[width=0.55\textwidth]{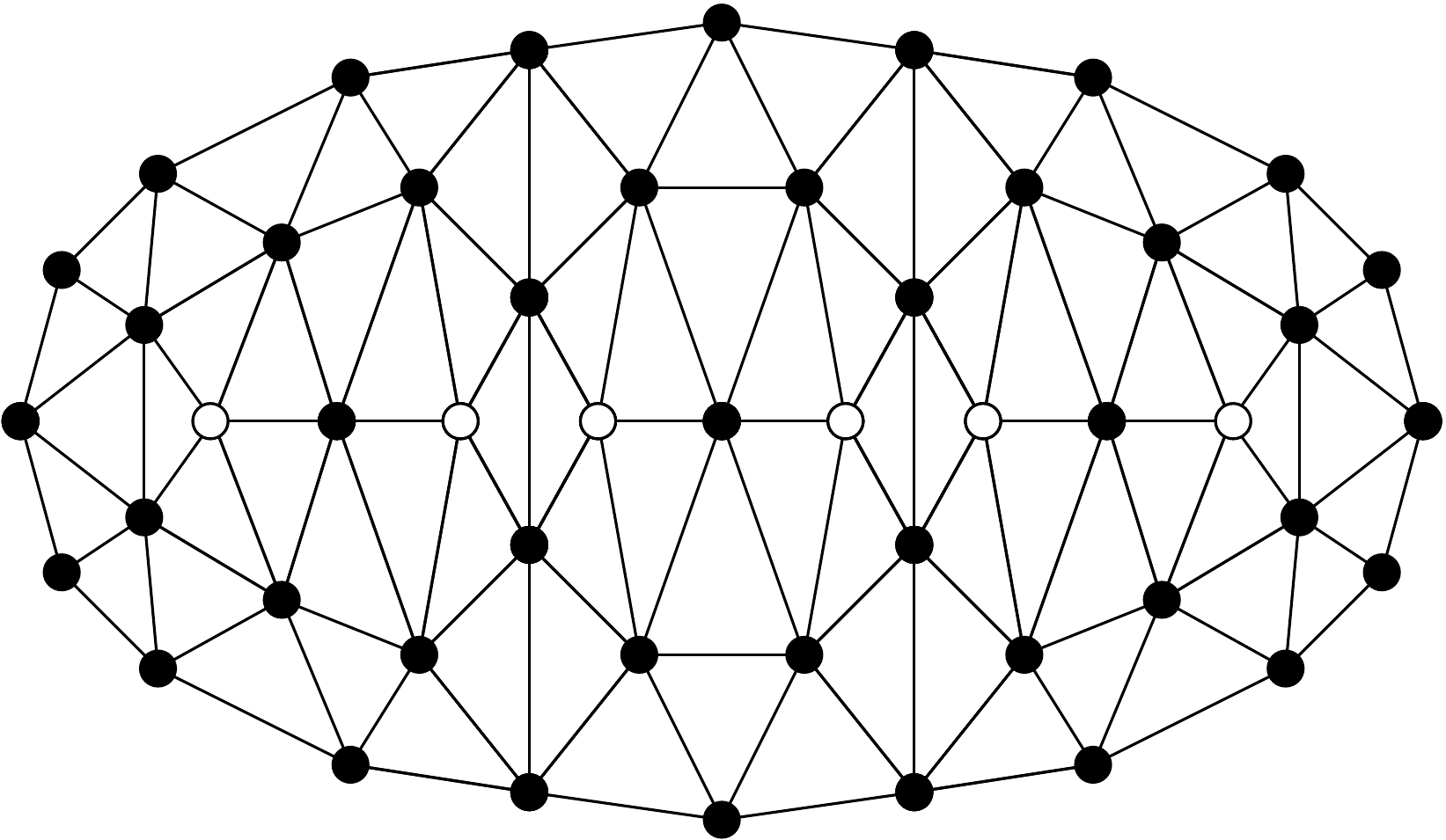}
	\caption{A locally irreducible 6-cluster, called \textit{straight-cluster}.}
	\label{fig:irred_6-cluster-straight}
\end{figure}

The first cluster is depicted in Figure~\ref{fig:irred_6-cluster-straight}. The six 5-vertices are white and the other vertices are 6-vertices. We call this a \textit{straight-cluster}. 
Every dual IPR fullerene which contains this cluster has an $L_6$-reduction to a smaller dual IPR fullerene
unless vertex $a$ or $b$ displayed in Figure~\ref{fig:irred_6-cluster-straight-L6} is a 5-vertex. This is shown in Figure~\ref{fig:irred_6-cluster-straight-L6}. Also here $a$ and $b$ may be part of the cluster. The path of vertices which is going to be reduced by the $L_6$-reduction is drawn with dashed edges. If $a$ or $b$ is a 5-vertex, there is an $L_2$-reduction which yields an IPR fullerene. This is shown in Figure~\ref{fig:irred_6-cluster-straight-L2} where it is assumed that $a$ is a 5-vertex. The reduced dual fullerene is IPR since $b$ is a 6-vertex, otherwise the original dual fullerene was not IPR. This gives us the following corollary:

\begin{corollary}
Every dual IPR fullerene which contains a straight-cluster is reducible to a smaller IPR fullerene.
\end{corollary}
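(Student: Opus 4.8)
The plan is to follow exactly the same template that was used for the 4-cluster and is sketched in the paragraph preceding the corollary. We are given a dual IPR fullerene $G$ containing a straight-cluster $C$, and we want to produce an $L$- or $B$-reduction of $G$ to a smaller dual IPR fullerene. The straight-cluster comes with a distinguished "long" path through all six 5-vertices, and along it there is an $L_6$-reduction (drawn with dashed edges in Figure~\ref{fig:irred_6-cluster-straight-L6}); the text already asserts this. So the first step is to verify that this $L_6$-reduction is legitimate as a reduction at all — i.e.\ the required faces $f_k$ in the $L$-pattern are genuinely distinct and the dashed path really sits inside $G$ the way the figure shows — which is immediate from the definition of "contains a cluster" (the boundary vertices of $C$ have degree $6$ in $G$, so the local picture around the dashed path is forced).

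The second step is the case analysis on the two flanking vertices $a$ and $b$ of Figure~\ref{fig:irred_6-cluster-straight-L6}. If both $a$ and $b$ are $6$-vertices in $G$, then performing the $L_6$-reduction does not create any adjacent $5$-vertices: the only new $5$-vertices are the two endpoints of the shortened path, and the surrounding vertices (including $a$, $b$) are $6$-vertices by hypothesis, while the $5$-vertices outside the reduced region are kept far enough away. Here I would invoke Lemmas~\ref{obs:reduction_ipr_5_vert} and~\ref{obs:reduction_ipr_6_vert} in the same manner as in the proof of Lemma~\ref{lemma:locally_reducible}: any external $5$-vertex $v$ is at distance $\ge 3$ from the $5$-vertices of $C$ (Definition~\ref{def:cluster}) and at distance $\ge 2$ from a $6$-vertex of $C$ adjacent to a $5$-vertex of $C$, and these distances survive the reduction to give distance $\ge 2$ in $G'$. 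Hence $G'$ is IPR, and it is smaller, so we are done in this case. The remaining case is that $a$ (or, symmetrically, $b$) is a $5$-vertex. Then, as Figure~\ref{fig:irred_6-cluster-straight-L2} shows, there is instead an $L_2$-reduction available; the key local fact needed is that the vertex $b$ in that figure is a $6$-vertex, which holds because otherwise $a$ and $b$ would be two $5$-vertices too close together and $G$ would fail to be IPR. One then checks, again via Lemmas~\ref{obs:reduction_ipr_5_vert} and~\ref{obs:reduction_ipr_6_vert}, that this $L_2$-reduction creates no adjacent $5$-vertices, so the resulting dual fullerene is IPR and smaller.

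The only subtlety — and the place I would be most careful — is that $a$, $b$, $x$, or $y$ may coincide with vertices that already belong to the cluster $C$ (the excerpt explicitly flags this: "$a$ and $b$ may be part of the cluster"). This means one cannot blindly read off distances from the local picture; one has to argue that even in these degenerate identifications the chosen reduction is still valid and still produces no adjacent pentagons. I expect this to be the main obstacle, but it is handled exactly as in Corollary~\ref{lemma:4_cluster}: the distance bounds coming from Definition~\ref{def:cluster} and from IPR-ness of $G$ are global statements about $G$, not about the drawn patch, so they continue to apply regardless of which vertices are identified; and a short check shows the $L_6$/$L_2$ reduction path only ever uses $6$-vertices in its interior, so the reduction remains well-defined. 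Combining the two cases gives the corollary: every dual IPR fullerene containing a straight-cluster reduces to a smaller IPR fullerene. \qed
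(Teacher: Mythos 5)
Your proposal matches the paper's own argument: the paper likewise applies the $L_6$-reduction along the straight-cluster when $a$ and $b$ are 6-vertices, falls back to the $L_2$-reduction when one of them is a 5-vertex (noting that the other must then be a 6-vertex since $G$ is IPR), allows $a,b$ to coincide with cluster vertices, and relies on the cluster distance conditions (as in Lemma~\ref{lemma:locally_reducible} via Lemmas~\ref{obs:reduction_ipr_5_vert} and~\ref{obs:reduction_ipr_6_vert}) to rule out interference from 5-vertices outside the cluster. So the proof is correct and essentially the same as the paper's, just spelled out in slightly more detail.
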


\begin{figure}[h!t]
    \centering
    \subfloat[]{
    \label{fig:irred_6-cluster-straight-L6}
    \includegraphics[width=0.45\textwidth]{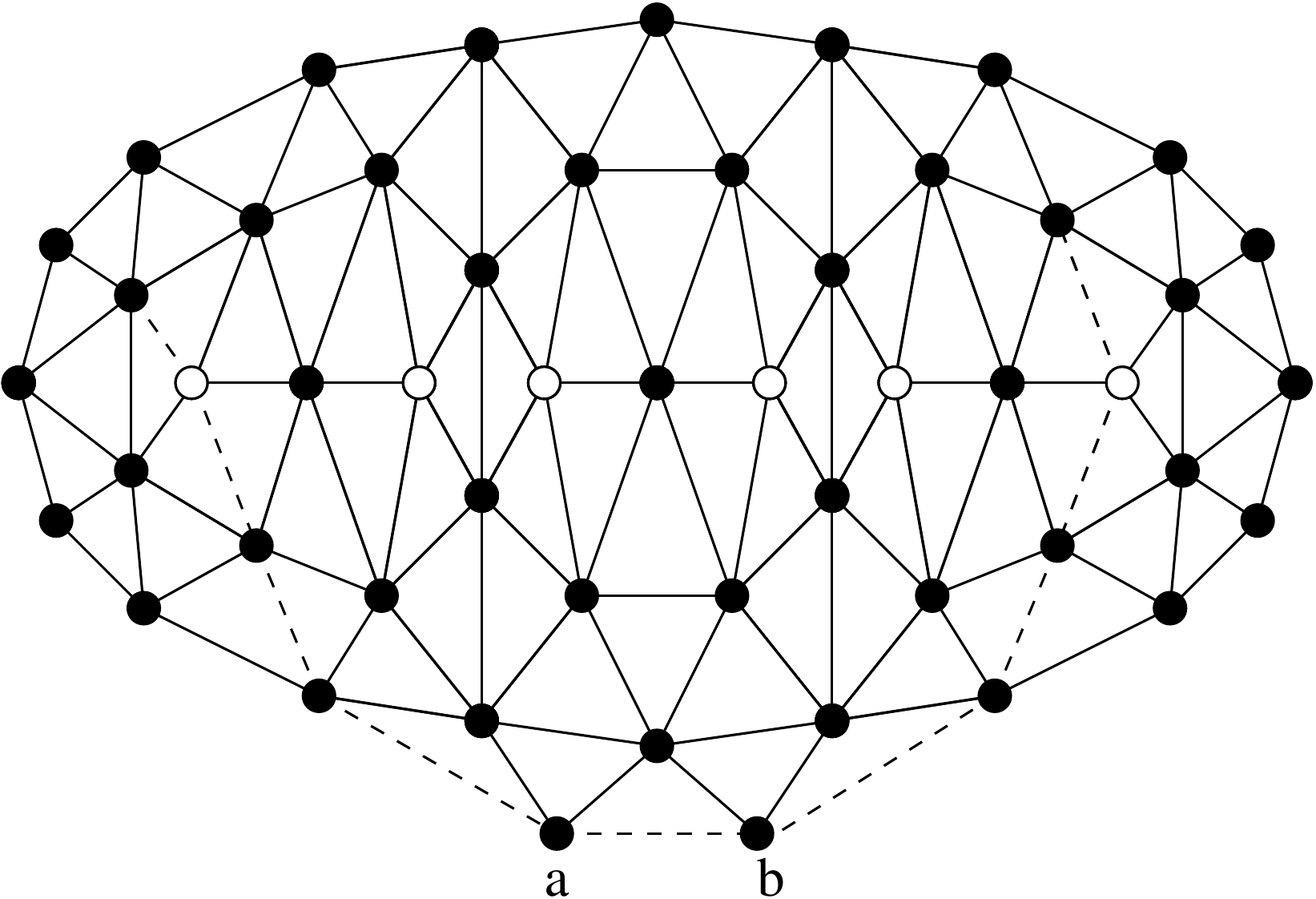}
    }%\qquad\ %\qquad
    \ \ \ \ 
    \subfloat[]{
    \label{fig:irred_6-cluster-straight-L2}
    \includegraphics[width=0.45\textwidth]{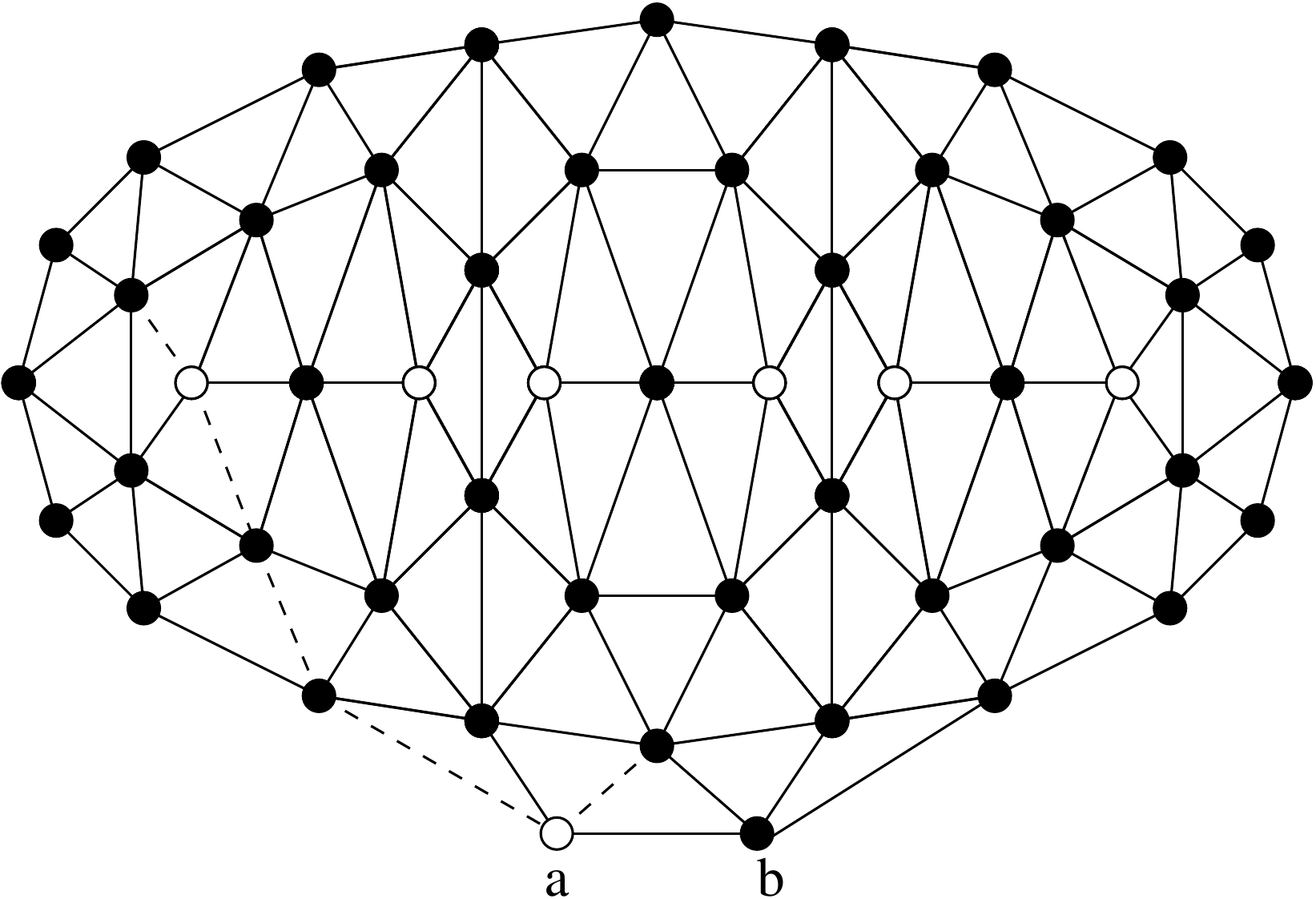}
    }
    \caption{Straight-cluster which has an $L_6$-reduction (i.e.~Figure~\ref{fig:irred_6-cluster-straight-L6}) or an $L_2$-reduction (i.e.~Figure~\ref{fig:irred_6-cluster-straight-L2}).}
\end{figure}

We call the cyclic sequence of the degrees of the vertices in the boundary of a patch in clockwise or counterclockwise order the \textit{boundary sequence}\index{patch!boundary!sequence} of a patch.

A \textit{cap}\index{cap} is a fullerene patch which contains 6 pentagons and has a boundary sequence of the form $(23)^l (32)^m$. Such a boundary is represented by the parameters $(l,m)$. In the literature, the vector $(l,m)$ is also called the \textit{chiral vector} (see~\cite{saito1998physical}). When we speak about \textit{caps} in the remainder of this article, we more specifically mean caps with a boundary sequence of the form $(23)^l (32)^m$. 
Not every patch of 6 pentagons can be completed with hexagons to a patch with a boundary sequence of the form $(23)^l (32)^m$ (see~\cite{justus_03} for an example), but the patches with 6 pentagons which we will discuss in the remainder of this section all can be completed with hexagons to a boundary of the form $(23)^l (32)^m$. 

A cap with boundary parameters $(m, l)$ is the mirror image of a cap with boundary $(l,m)$. A cap has a valid reduction if and only if its mirror image is also reducible. Therefore we will assume that $l \ge m$.
It follows from the results of Brinkmann~\cite{thesis_gunnar} that a (fullerene) patch which contains 6 pentagons and which can be completed with hexagons to a boundary of the form $(23)^l (32)^m$ has unique boundary parameters, i.e.\ it cannot be completed to a boundary with parameters $(l',m')$ where $l'$ is different from $l$ or $m'$ is different from $m$.

The second irreducible 6-cluster is depicted in Figure~\ref{fig:irred_6-cluster-star}. We call this a \textit{distorted star-cluster}. 
By checking all possible reductions, it can be seen that for any dual IPR fullerene which contains this cluster there are no reductions to a smaller dual IPR fullerene where both 5-vertices of the reduction are in the distorted star-cluster.

\begin{figure}[h!t]
	\centering
	\includegraphics[width=0.5\textwidth]{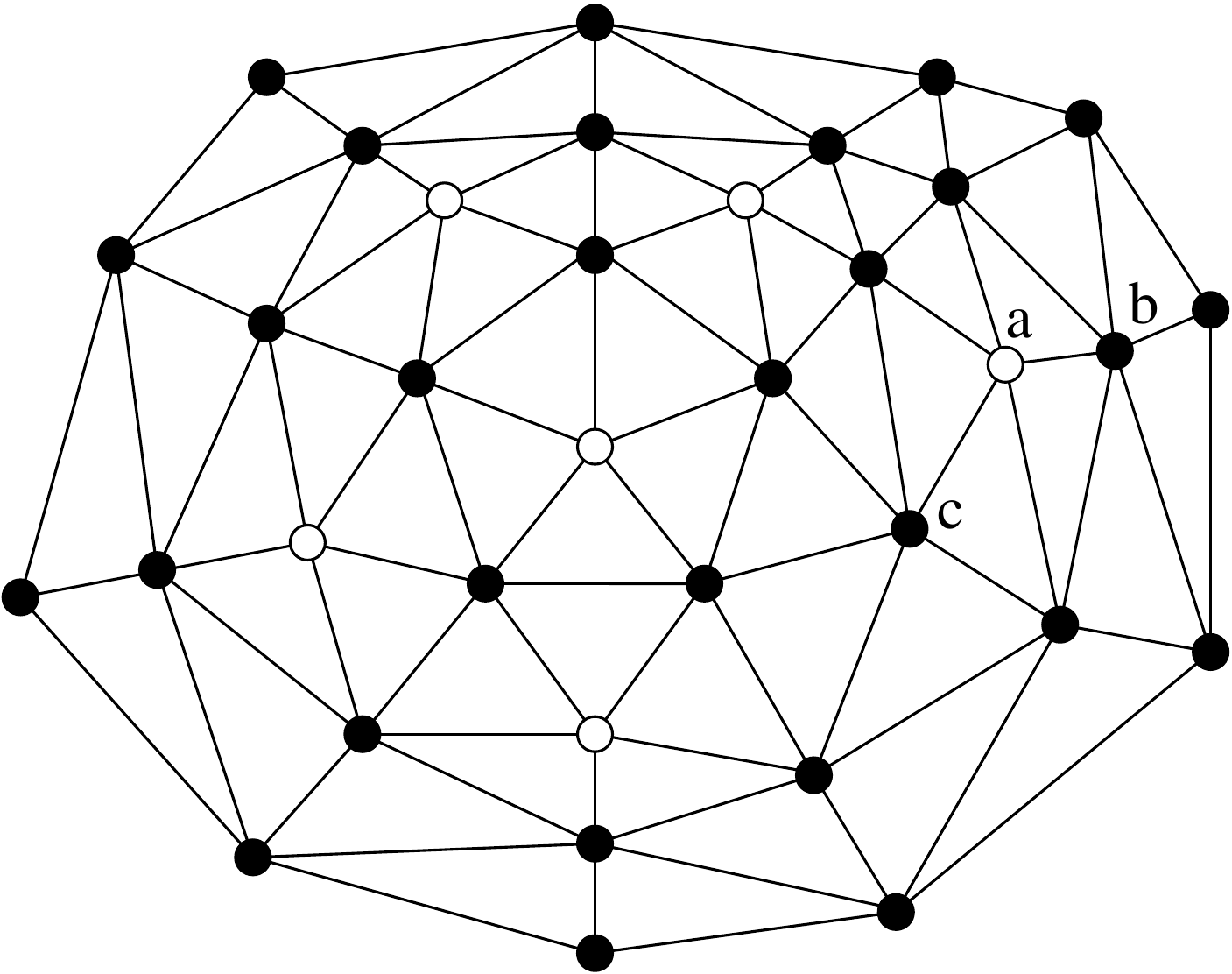}
	\caption{A locally irreducible 6-cluster, called \textit{distorted star-cluster}.}
	\label{fig:irred_6-cluster-star}
\end{figure}

Caps which contain the dual of a distorted star-cluster as a subgraph have boundary parameters (6,5).  Adding a ring of hexagons (or a ring of 6-vertices in the dual) to a cap does not change the boundary parameters of the cap.
Note that there are multiple ways of gluing together two caps with boundary parameters $(l,0)$ to a fullerene.
We call an $(l,m)$ ring of hexagons of an IPR fullerene \textit{removable} if there is a way of removing that ring of hexagons such that the reduced fullerene is still IPR.

We call a cap which contains at least one pentagon in its boundary a \textit{kernel}\index{kernel}. Clearly, every cap has a kernel.

%\begin{figure}[h!t]
%	\centering
%	\includegraphics[width=0.9\textwidth]{cap_6_5_ipr_hexagonring.pdf}
%    \caption{Adding a ring of hexagons to an IPR cap with boundary parameters~(6,5).}
%	\label{fig:hexagon_ring}
%\end{figure}

The program from Brinkmann et al.\ described in~\cite{brinkmann_02} generates all nanotube caps which are non-isomorphic as infinite half-tubes. This is done by first generating all non-isomorphic nanotube caps and then filtering out the ones which are non-isomorphic as infinite half-tubes. We modified the program so it outputs all non-isomorphic nanotubes (thus also the ones which are isomorphic as infinite half-tubes). By using this modified version of the generator, we were able to generate all IPR (6,5) kernels. The largest one has 73 vertices, so an IPR fullerene which contains a (6,5) cap and has no removable (6,5) hexagon rings has at most $2 \cdot 73 + 2 \cdot (6+5) = 168$ vertices. The $2 \cdot (6+5)$ represents a ring of hexagons, since the fullerene consisting of 2 IPR kernels may not be IPR.

Using the corrected version of \textit{fullgen}~\cite{brinkmann_97}, we determined all IPR fullerenes up to 168 vertices which have a (6,5) boundary and do not have any removable (6,5) hexagon rings. There are 11 such fullerenes and each of them is reducible to a smaller IPR fullerene. The largest one has 106 vertices. These results have been independently confirmed by \textit{buckygen}~\cite{fuller-paper} using a filter and look-aheads for IPR fullerenes. All of the dual (6,5) caps in these 11 dual IPR fullerenes contain a connected subgraph with six 5-vertices which is isomorphic to a subgraph of the distorted star-cluster.

Consider the directed edge $(a,b)$ from the distorted star-cluster from Figure~\ref{fig:irred_6-cluster-star}. 
If a ring of 6-vertices is added to a dual (6,5) cap which contains $(a,b)$, the straight path starting from $(a,b)$ still exits the cap at the same relative position in the larger dual cap.
Consider a dual IPR fullerene $F$ which has a (6,5) boundary. If there is an $L$ or $B$-reduction which starts from $(a,b)$ and where the second 5-vertex of the reduction is part of the other dual cap of $F$, then the dual fullerene $F'$ obtained by adding a (6,5) ring of 6-vertices to $F$ is still reducible by the same reduction (but which now has one additional 6-vertex). So if the reduction in $F$ was an $L_x$ reduction, it will be an $L_{x+1}$ reduction in $F'$. 
(Note that a reduction where $a$ is one of the 5-vertices involved in the reduction and where $b$ is part of the reduction path can only produce a smaller dual IPR fullerene if vertex $c$ (from Figure~\ref{fig:irred_6-cluster-star}) is the 6-vertex which is transformed into a 5-vertex by the reduction.)

We then added (6,5) rings of 6-vertices to these 11 dual fullerenes which have a (6,5) boundary and do not have any removable (6,5) rings of 6-vertices. When 5 rings of 6-vertices have been added, there is a reduction from $(a,b)$ to the other dual cap in each of the 11 cases. So all dual fullerenes of these 11 types with at least 5 (6,5) rings of 6-vertices are reducible to a smaller dual IPR fullerene. We also verified that each of these 11 types of dual fullerenes with less than 5 rings of 6-vertices are reducible as well.

This gives us the following corollary:

\begin{corollary} \label{cor:65-boundary}
Every dual IPR fullerene which contains a (6,5) boundary is reducible to a smaller dual IPR fullerene.
\end{corollary}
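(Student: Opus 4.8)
The statement is essentially assembled by the discussion preceding it, so the plan is to organise that discussion into a clean case analysis. Write $F$ for a dual IPR fullerene containing a boundary of the form $(23)^6(32)^5$; then $F$ is a $(6,5)$ nanotube, consisting in the dual of two caps with boundary parameters $(6,5)$, equivalently of two $(6,5)$ kernels separated by a block of $t\ge 0$ rings of $6$-vertices. First I would reduce to the case $t$ minimal: removing a $(6,5)$ ring of $6$-vertices from the tube region yields a smaller dual fullerene that still has a $(6,5)$ boundary (ring removal does not change the boundary parameters) and, when the ring is removable, is still IPR; iterating, $F$ peels down to a dual IPR fullerene $F_0$ with a $(6,5)$ boundary and no removable $(6,5)$ ring, with $F$ recovered from $F_0$ by reinserting some number $s\ge 0$ of $(6,5)$ rings of $6$-vertices. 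I stress that this peeling is only a bookkeeping device: removing a hexagon ring is not itself an $L$- or $B$-reduction, so an $L$- or $B$-reduction of $F$ must still be exhibited for every value of $s$; and since the distorted star-cluster of Figure~\ref{fig:irred_6-cluster-star} is not locally reducible, Lemma~\ref{lemma:locally_reducible} does not apply and a direct argument is unavoidable.

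Next I would bound $F_0$. The modified generator of Brinkmann et al.\ produces all IPR $(6,5)$ kernels, the largest on $73$ vertices; since at most one extra ring of hexagons (the $2\cdot(6+5)$ term below) is ever needed between two IPR kernels to keep the fullerene IPR, any such $F_0$ has at most $2\cdot 73 + 2\cdot(6+5) = 168$ vertices. An exhaustive search with \textit{fullgen}, independently confirmed by \textit{buckygen}, then yields the finite list: there are exactly $11$ such $F_0$ (the largest on $106$ vertices), and each comes with a concrete $L$- or $B$-reduction to a smaller dual IPR fullerene. This disposes of $s=0$. For $1\le s\le 4$ I would likewise check, one by one, the finitely many dual fullerenes obtained from the $11$ choices of $F_0$ by inserting $s$ rings; the text asserts all of these are reducible.

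The case $s\ge 5$ carries the real content and is handled by a lifting argument. Every dual $(6,5)$ cap occurring in any of the $11$ base fullerenes contains a subgraph on six $5$-vertices isomorphic to a subgraph of the distorted star-cluster, in particular the directed edge $(a,b)$ of Figure~\ref{fig:irred_6-cluster-star}, and the straight path issuing from $(a,b)$ leaves the cap at a position that the insertion of further $(6,5)$ rings of $6$-vertices does not change. One then verifies directly that after inserting exactly $5$ rings there is, in each of the $11$ cases, an $L$- or $B$-reduction whose path starts at $(a,b)$ and whose second active $5$-vertex lies in the opposite cap --- handling along the way the exceptional situation noted in the text in which $a$ is itself an active vertex and $b$ lies on the reduction path, where the $6$-vertex turned into a $5$-vertex must be the vertex $c$ of Figure~\ref{fig:irred_6-cluster-star}. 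Inserting yet more rings merely lengthens this path, turning an $L_x$-reduction into an $L_{x+1}$-reduction (and analogously for $B$), so the reduction survives; hence every $F$ with $s\ge 5$ is reducible. Combined with the cases $0\le s\le 4$, this covers all $(6,5)$-boundary dual IPR fullerenes.

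The main obstacle is not a single deduction but two structural points that need to be pinned down. First, one must be certain that every $(6,5)$-boundary dual IPR fullerene really is one of the $11$ base fullerenes with some number of $(6,5)$ rings reinserted; this leans on the finiteness of IPR $(6,5)$ kernels (hence on the correctness of the modified kernel generator) and on the completeness of the \textit{fullgen}/\textit{buckygen} search up to $168$ vertices. Second, the lifting argument must be made airtight: one has to confirm that the threshold of $5$ inserted rings really is enough in all $11$ cases and that the chosen reduction --- including the $c$-vertex subtlety --- is genuinely stable under the insertion of arbitrarily many further rings.
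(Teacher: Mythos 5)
Your proposal is correct and follows essentially the same route as the paper: bound the ring-free (6,5)-boundary IPR fullerenes via the 73-vertex kernel bound (168 vertices), enumerate the 11 base cases by computer search, check reducibility directly for fewer than 5 inserted rings, and use the stability of a reduction starting at the directed edge $(a,b)$ (including the vertex-$c$ caveat) to lift reducibility to any number of rings of 6-vertices beyond 5. The only difference is presentational — you phrase the decomposition as a peeling-down of removable rings, which is implicit in the paper's discussion.
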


There is a dual (6,5) kernel which is a subgraph of the distorted star-cluster. So if a dual fullerene contains a distorted star-cluster, it also has a dual (6,5) kernel and thus also a (6,5) boundary. This gives us:

\begin{corollary}
Every dual IPR fullerene which contains a distorted star-cluster is reducible to a smaller dual IPR fullerene.
\end{corollary}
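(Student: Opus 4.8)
The plan is to reduce this statement to Corollary~\ref{cor:65-boundary}, which already establishes that every dual IPR fullerene with a $(6,5)$ boundary is reducible to a smaller dual IPR fullerene. So it suffices to show that every dual IPR fullerene containing a distorted star-cluster necessarily has a $(6,5)$ boundary, and then invoke that corollary.

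First I would inspect the distorted star-cluster of Figure~\ref{fig:irred_6-cluster-star} and exhibit inside it an explicit sub-patch which is a dual $(6,5)$ \emph{kernel}, i.e.\ a dual cap with $6$ pentagons having at least one $5$-vertex on its boundary. Since all six $5$-vertices of a cluster are non-boundary vertices lying at distance at least $2$ from the cluster boundary, the sub-patch obtained by taking these six $5$-vertices together with an appropriate collar of surrounding $6$-vertices is a genuine fullerene patch containing exactly $6$ pentagons; I would then read off its boundary sequence, complete it with hexagons where needed so that it has the form $(23)^l(32)^m$, and check that the resulting chiral vector is $(6,5)$. This is exactly the claim summarised in the sentence ``There is a dual $(6,5)$ kernel which is a subgraph of the distorted star-cluster''; carrying it out is a finite inspection of the single concrete picture, and it is the only place where anything must be verified by hand.

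Next I would use the definition of ``$G$ contains the cluster $C$'': every boundary vertex of the distorted star-cluster has degree $6$ in $G$, so the kernel found above sits inside $G$ as a dual cap whose boundary is entirely made up of $6$-vertices. Consequently $G$ decomposes along the corresponding cyclic boundary into this $(6,5)$ cap and the remainder of the dual fullerene, which is precisely what it means for $G$ to have a $(6,5)$ boundary. Applying Corollary~\ref{cor:65-boundary} then yields an $L$ or $B$-reduction of $G$ to a smaller dual IPR fullerene, completing the proof.

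The main obstacle is the handwork in the first step: one must correctly single out which subgraph of the distorted star-cluster serves as the kernel and confirm that its boundary parameters really are $(6,5)$ and not some other pair. Here the uniqueness of the boundary parameters of a $6$-pentagon patch, which follows from the results of Brinkmann~\cite{thesis_gunnar}, is what makes the identification unambiguous once a single completion to a $(23)^l(32)^m$ boundary is found. Everything after that point is immediate from results already proved.
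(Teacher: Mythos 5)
Your proposal is correct and follows essentially the same route as the paper: the paper's argument is precisely that the distorted star-cluster contains a dual $(6,5)$ kernel as a subgraph, so any dual IPR fullerene containing the cluster has a $(6,5)$ boundary and is therefore reducible by Corollary~\ref{cor:65-boundary}. The only work, as you note, is the finite inspection identifying the kernel and its boundary parameters, with Brinkmann's uniqueness result guaranteeing the parameters are well defined.
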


The remaining four locally irreducible 6-clusters are depicted in Figure~\ref{fig:4_irred_6_clusters}. We call them clusters I, II, III and IV respectively. Dual caps which contain cluster I, II, III or IV as a subgraph have boundary parameters (5,5), (8,2), (9,0) and (10,0) respectively.

\begin{figure}[h!t]
    \centering
    \subfloat[cluster I]{
    \label{fig:cluster_I}
    \includegraphics[width=0.4\textwidth]{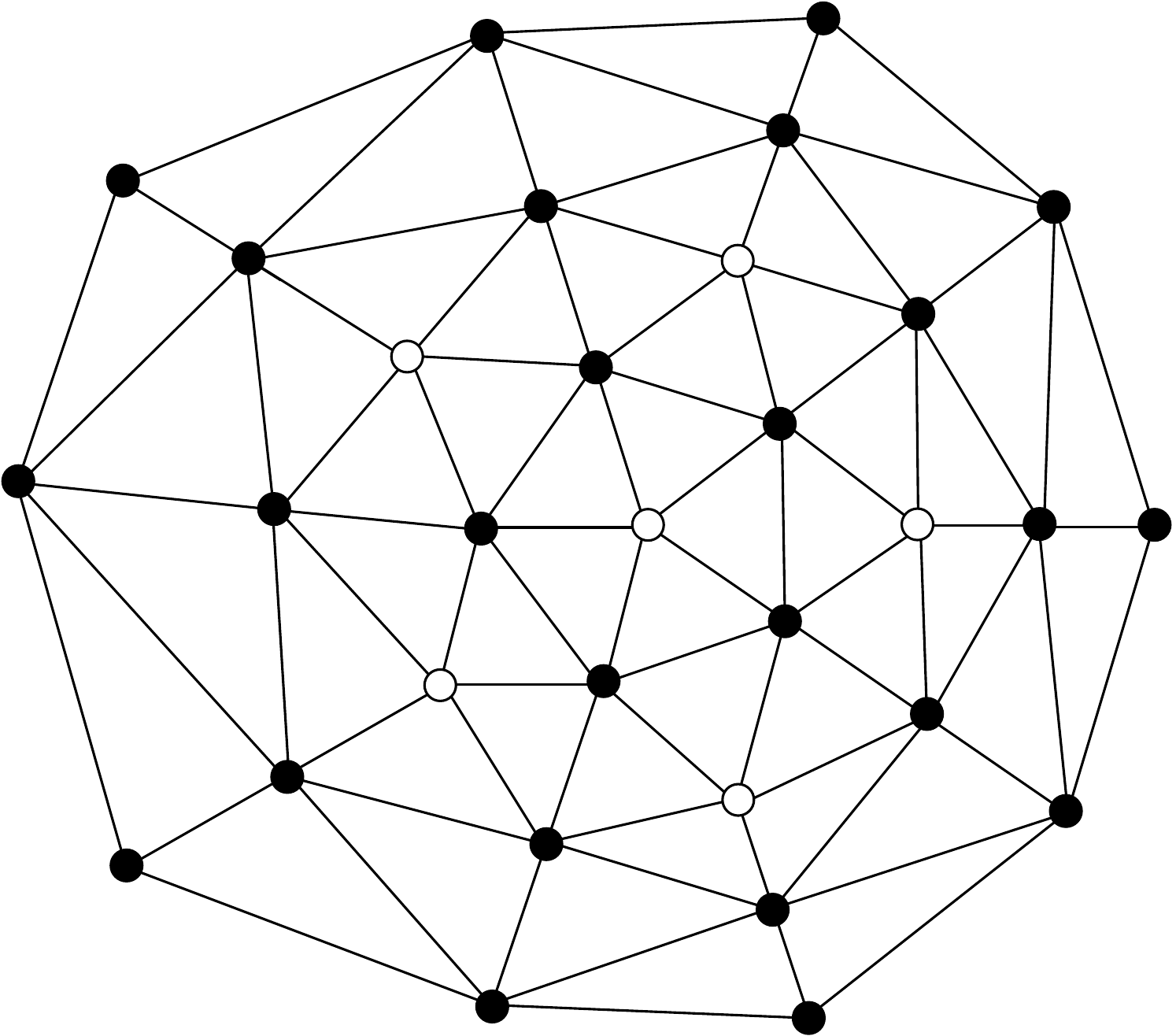}
    }\qquad\ %\qquad
    \subfloat[cluster II]{
    \label{fig:cluster_II}    
    \includegraphics[width=0.4\textwidth]{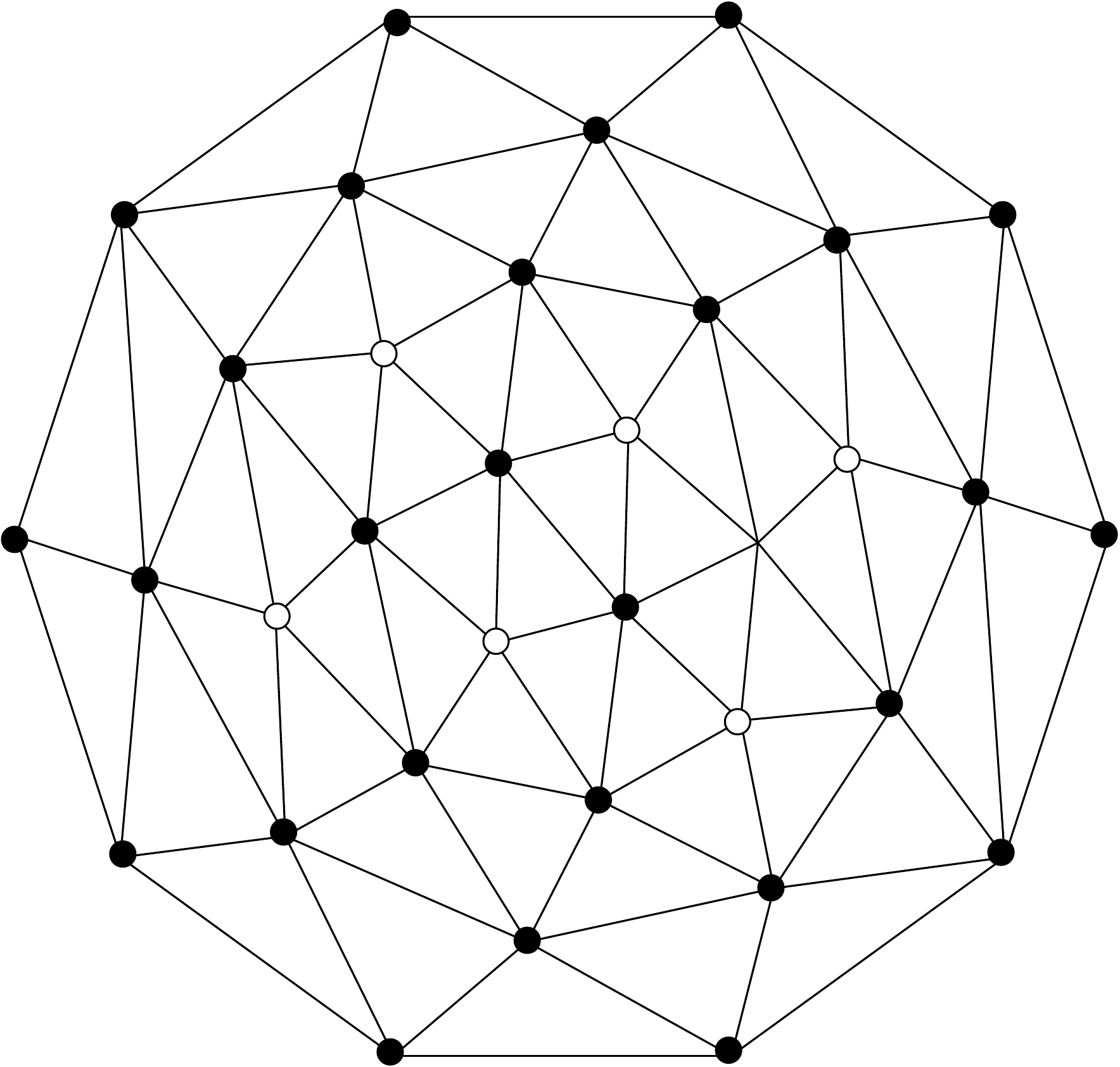}
    }
    
    \subfloat[cluster III]{
    \label{fig:cluster_III}
    \includegraphics[width=0.4\textwidth]{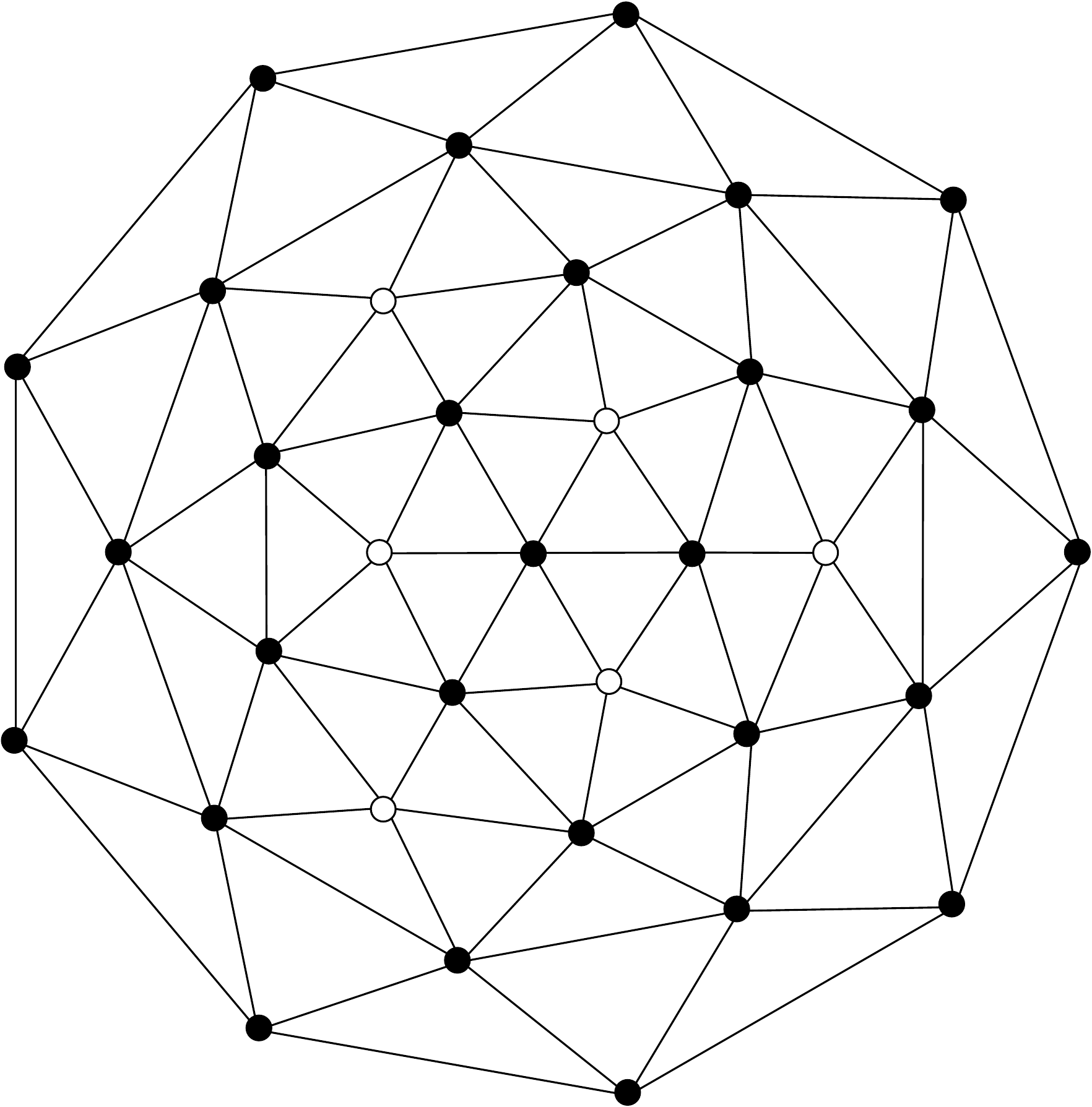}
    }\qquad\ %\qquad
    \subfloat[cluster IV]{
    \label{fig:cluster_IV}    
    \includegraphics[width=0.4\textwidth]{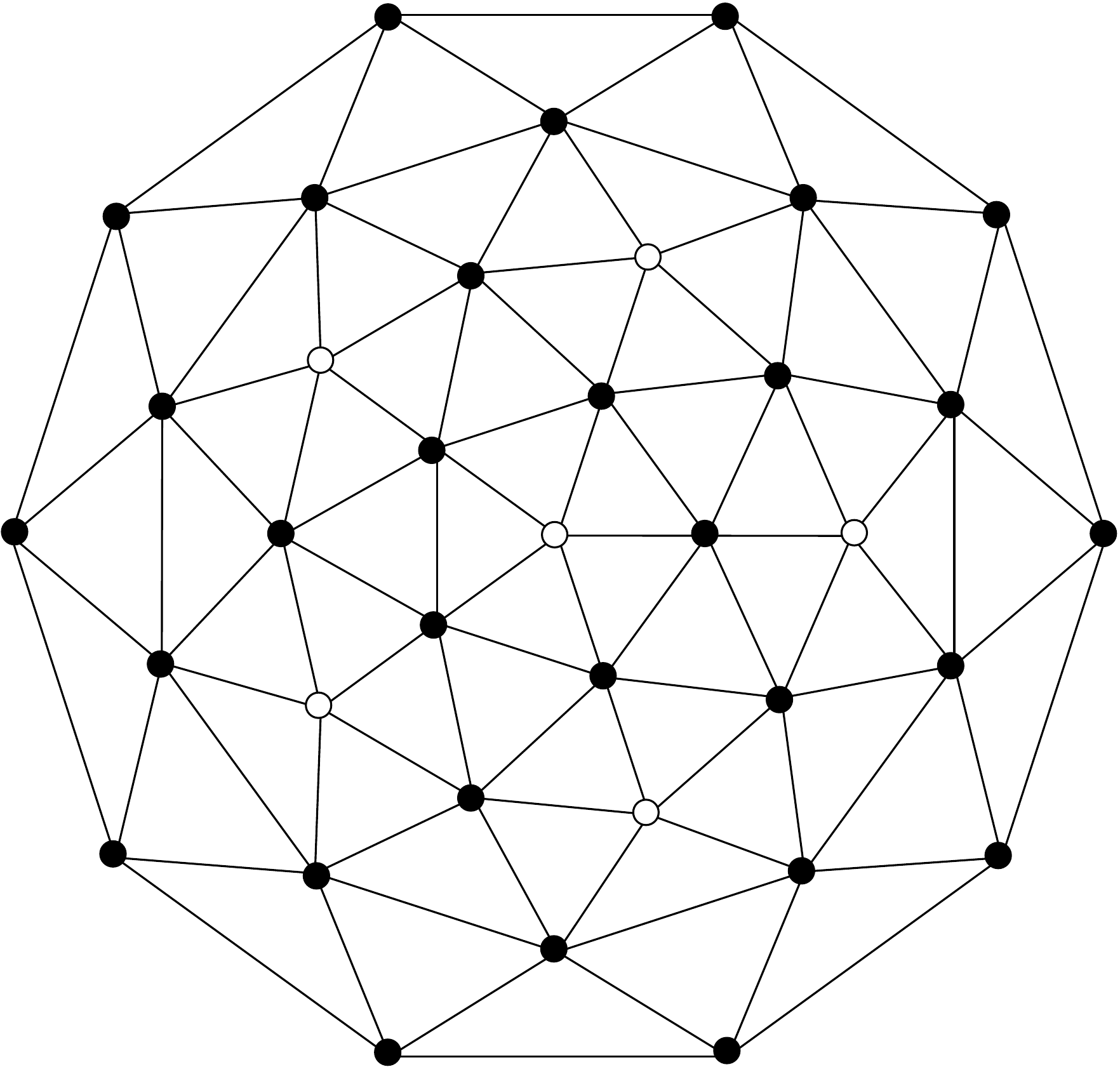}
    }    
    \caption{Four irreducible 6-clusters. 
    %Figure~\ref{fig:cluster_I} corresponds to cluster I, Figure~\ref{fig:cluster_II} to cluster II, Figure~\ref{fig:cluster_III} to cluster III and Figure~\ref{fig:cluster_IV} to cluster IV.
    }
    \label{fig:4_irred_6_clusters}
\end{figure}

By checking all possible reductions which involve a 5-vertex which is part of one of these four clusters, it can be seen that dual IPR fullerenes which contain one of these clusters do not have a reduction to a smaller dual IPR fullerene where at least one of the 5-vertices involved in the reduction is in one of these four clusters. We call clusters with this property \textit{globally irreducible}\index{cluster!globally irreducible}. This gives us:

\begin{corollary} 
Every dual IPR fullerene which contains two 6-clusters $c$ and $d$ with
$c, d \in \{I, II,\allowbreak III, IV\}$ is \underline{not} reducible to a smaller dual IPR fullerene.
\end{corollary}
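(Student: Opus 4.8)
The plan is to derive the corollary immediately from the (already established) global irreducibility of clusters I--IV together with a counting argument on the twelve 5-vertices of a dual fullerene. First I would recall two facts: a dual fullerene has exactly twelve vertices of degree~5, and by the remark following Lemma~\ref{lemma:locally_reducible} the sets of 5-vertices of two distinct clusters contained in a dual fullerene are disjoint. Next I would note that, since $C$ ``contains'' each of its clusters, every vertex on the boundary of such a cluster has degree~6 in the host dual fullerene, so the 5-vertices of a cluster are precisely its non-boundary degree-5 vertices; as each of I, II, III, IV is a $6$-cluster, each contributes exactly six degree-5 vertices of the host graph.

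So let $G$ be a dual IPR fullerene containing two $6$-clusters $c$ and $d$ with $c,d\in\{I,II,III,IV\}$. These are two distinct subgraphs of $G$, hence their 5-vertex sets are disjoint; together they contain $6+6=12$ degree-5 vertices of $G$, i.e.\ \emph{every} degree-5 vertex of $G$ is a non-boundary 5-vertex of $c$ or of $d$. Now consider any $L$- or $B$-reduction applicable to $G$. Such a reduction acts along a path joining the two faces drawn as pentagons in Figure~\ref{fig:fullerene_operations}, which in the dual are two vertices of degree~5. By the previous sentence, each of these two 5-vertices lies in $c$ or in $d$; in particular at least one of the two 5-vertices of the reduction lies in $c$, or at least one of them lies in $d$. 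Since clusters I--IV were shown above to be globally irreducible --- no dual IPR fullerene containing one of them admits a reduction to a smaller dual IPR fullerene in which one of the reduction's 5-vertices lies in that cluster --- this reduction cannot produce a smaller dual IPR fullerene. As the reduction was arbitrary, $G$ is not reducible to a smaller dual IPR fullerene.

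I do not expect a genuine obstacle here; the real content of the corollary is carried entirely by the computer-assisted verification that clusters I--IV are globally irreducible, and by the arithmetic fact that two of these $6$-clusters already account for all twelve 5-vertices. The only points needing a little care are (i) that the two clusters $c,d$ are distinct subgraphs of $G$, so that their 5-vertex sets really are disjoint and exhaust the twelve degree-5 vertices, and (ii) that the two distinguished pentagons of any $L$- or $B$-operation are honest degree-5 vertices of the dual, so that ``at least one 5-vertex of the reduction lies in $c$ or in $d$'' is exactly the configuration excluded by global irreducibility.
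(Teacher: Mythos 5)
Your proposal is correct and takes essentially the same route as the paper: the corollary is drawn directly from the computer-verified global irreducibility of clusters I--IV, with the counting step (two disjoint 6-clusters account for all twelve 5-vertices, so every $L$- or $B$-reduction has both its 5-vertices inside $c$ or $d$) left implicit in the paper and merely spelled out by you. The only cosmetic slip is calling the host fullerene $C$ at one point instead of $G$; the substance is the paper's argument.
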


Also note that dual caps which contain a connected subgraph of six 5-vertices which is isomorphic to a subgraph of a cluster $c \in \{I, II, III, IV\}$ have different boundary parameters for each different $c$. Therefore dual IPR fullerenes which contain two 6-clusters $c$ and $d$ with $c \in \{I, II, III, IV\}$ and $d \in \{I, II, III, IV\} \setminus \{c\}$ do not exist.

All dual caps which contain a connected subgraph with six 5-vertices which is isomorphic to a subgraph of cluster I-IV are globally irreducible as well. So all IPR fullerenes which can be decomposed into 2 caps where both caps are globally irreducible are not reducible to a smaller IPR fullerene.

By using the generator for caps from Brinkmann et al.~\cite{brinkmann_02}, we were able to determine that all dual IPR caps with boundary parameters (5,5) (respectively (8,2) and (9,0)) contain a connected subgraph with six 5-vertices which is isomorphic to a subgraph of cluster I (respectively II and III). However there are caps with boundary parameters (10,0) which do not contain a connected subgraph with six 5-vertices which is isomorphic to a subgraph of cluster IV. This gives us the following corollary:

\begin{corollary} \label{cor:6-cluster_3}
Every IPR fullerene which contains a (5,5), (8,2) or (9,0) boundary is \underline{not} reducible to a smaller IPR fullerene.
\end{corollary}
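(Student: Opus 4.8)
The plan is to combine the cap decomposition of a fullerene that has a $(5,5)$, $(8,2)$ or $(9,0)$ boundary with the global irreducibility results already established for clusters I, II, III. Working in the dual, suppose $F$ is a dual IPR fullerene that contains an $(l,m)$ boundary with $(l,m)\in\{(5,5),(8,2),(9,0)\}$. Such a boundary is a closed $(23)^l(32)^m$ curve that cuts $F$ into two fullerene patches, one with boundary parameters $(l,m)$ and the other with the mirror parameters $(m,l)$. A standard Gauss--Bonnet count for fullerene patches shows that a patch whose boundary sequence is $(23)^l(32)^m$ has exactly six interior $5$-vertices (the numbers of degree-$2$ and degree-$3$ boundary vertices being equal), so each of the two patches is a cap; since $F$ has $12$ pentagons, its $12$ $5$-vertices are split evenly, six into each cap $C_1$, $C_2$. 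By uniqueness of boundary parameters (Brinkmann~\cite{thesis_gunnar}) these parameters are well defined, and since $(m,l)$ is merely the mirror of $(l,m)$ and the properties used below are mirror-invariant, it suffices to treat the side with parameters $(l,m)$, $l\ge m$.

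Next I would invoke the verification made with the (modified) cap generator of Brinkmann et al.~\cite{brinkmann_02}: every dual IPR cap with boundary parameters $(5,5)$, $(8,2)$ or $(9,0)$ contains a connected subgraph with six $5$-vertices isomorphic to a subgraph of cluster I, II or III respectively, and any dual cap containing such a subgraph is globally irreducible. As the six $5$-vertices of such a sub-cluster are exactly the six pentagons of the cap, both $C_1$ and $C_2$ are globally irreducible.

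Finally, I would deduce that $F$ itself admits no reduction. Any $L$- or $B$-reduction of $F$ acts on two of its $5$-vertices; each of these is one of the six pentagons of $C_1$ or one of the six pentagons of $C_2$, so at least one of the two reduction $5$-vertices lies inside a globally irreducible (sub-)cluster sitting in $F$. By the defining property of global irreducibility, no such reduction can produce a smaller dual IPR fullerene, so $F$ is irreducible. (The same argument fails for $(10,0)$: as noted above, not every dual IPR $(10,0)$ cap contains a copy of a sub-cluster of cluster IV, so $(10,0)$ is deliberately excluded from the statement.)

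The genuinely heavy work lies in the computations already quoted, not in this assembly: the exhaustive check that clusters I--IV are globally irreducible, and the finite check that every dual IPR cap with boundary $(5,5)$, $(8,2)$ or $(9,0)$ contains the required subgraph with six $5$-vertices. Granting those, the main point to handle carefully is that an $(l,m)$ boundary really does split $F$ into two honest caps — each carrying exactly six pentagons and a well-defined $(l,m)$ boundary (up to mirror image) — which is where the patch curvature count and the uniqueness of boundary parameters enter.
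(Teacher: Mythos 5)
Your proposal is correct and follows essentially the same route as the paper: decompose the fullerene along the $(l,m)$ boundary into two caps, invoke the computational fact that every dual IPR cap with boundary $(5,5)$, $(8,2)$ or $(9,0)$ contains a subgraph with six $5$-vertices isomorphic to a subgraph of cluster I, II or III and is therefore globally irreducible, and conclude that any $L$- or $B$-reduction would have to use a $5$-vertex of a globally irreducible cap. Your extra curvature argument showing each side of the boundary is an honest cap with six pentagons only makes explicit what the paper leaves implicit.
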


We will now show that all dual IPR fullerenes which have a (10,0) boundary are reducible, except for dual fullerenes where both caps contain a connected subgraph with six 5-vertices which is isomorphic to a subgraph of cluster IV and for a limited number of dual fullerenes which contain an irreducible 12-cluster.

By using the modified version of the generator for caps from Brinkmann et al.~\cite{brinkmann_02}, we were able to generate all IPR (10,0) kernels. The largest one has 60 vertices, so an IPR fullerene which contains a (10,0) cap and has no reducible (10,0) hexagon rings has at most $2 \cdot 60 + 2 \cdot 10 = 140$ vertices. Using \textit{fullgen} we determined all of these fullerenes. These results were also independently confirmed by \textit{buckygen}.

All of these dual IPR fullerenes are reducible, except the ones where both dual caps contain a connected subgraph with six 5-vertices isomorphic to a subgraph of cluster IV and a limited number of dual fullerenes which contain a 12-cluster. In Section~\ref{section:irred_7_12_clusters} we will show which dual fullerenes containing a 12-cluster are irreducible. 

We verified that for each of these reducible IPR fullerenes $F$ there is an $r$ such that the fullerenes obtained by adding $r$ $(10,0)$ rings of hexagons to $F$ have a reduction which is entirely within one cap. 
We also verified that all fullerenes obtained from $F$ by adding less than $r$ $(10,0)$ rings of hexagons are reducible as well. The irreducible dual IPR fullerenes which contain a 12-cluster where the dual caps do not contain a connected subgraph with six 5-vertices which is isomorphic to a subgraph of cluster IV also become reducible if a $(10,0)$ ring of 6-vertices is added. Also for these dual fullerenes there is an $r$ such that the dual fullerenes obtained by adding $r$ $(10,0)$ rings of 6-vertices have a reduction which is entirely within one dual cap (and all of these dual fullerenes obtained by adding less than $r$ $(10,0)$ rings of 6-vertices are reducible as well).

This gives us the following corollary:

\begin{corollary} \label{cor:10_0_red}
Every dual IPR fullerene which contains a (10,0) boundary is reducible to a smaller dual IPR fullerene, except for dual fullerenes where both dual caps contain a connected subgraph with six 5-vertices which is isomorphic to a subgraph of cluster IV, and for a limited number of dual fullerenes which contain an irreducible 12-cluster.
\end{corollary}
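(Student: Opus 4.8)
The plan is to reduce the (in principle) infinite family of dual IPR fullerenes with a $(10,0)$ boundary to a finite computation, using the same two-part strategy that settled the $(6,5)$ boundary in Corollary~\ref{cor:65-boundary}: first dispose of all ``short'' fullerenes by an exhaustive search for reductions, then show that the remaining fullerenes --- which are obtained from short ones by inserting $(10,0)$ rings of $6$-vertices --- \emph{stabilise}, in the sense that after finitely many rings some reduction lies entirely inside one dual cap and therefore survives all further ring insertions.

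First I would bound the kernels. A dual IPR fullerene $F$ with a $(10,0)$ boundary decomposes along a cyclic $5$-edge-cut into two dual caps, and each dual cap is a dual $(10,0)$ kernel with some number of $(10,0)$ rings of $6$-vertices appended. Using the modified cap generator of Brinkmann et al.~\cite{brinkmann_02} I would enumerate all IPR $(10,0)$ kernels; the largest has $60$ vertices, so any such $F$ with no removable $(10,0)$ ring of $6$-vertices has at most $2\cdot 60 + 2\cdot 10 = 140$ vertices (the extra $2\cdot 10$ allows one ring on the seam, since gluing two IPR kernels need not be IPR). I would then generate all dual IPR fullerenes of this form with \textit{fullgen}~\cite{brinkmann_97} and, independently, confirm with \textit{buckygen}~\cite{fuller-paper}, and for each one search over all $L_i$- and $B_{i,j}$-reductions for one that produces a smaller dual IPR fullerene. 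The expected outcome is that every such $F$ is reducible except (i) those in which both dual caps contain a connected subgraph on six $5$-vertices isomorphic to a subgraph of cluster~IV, and (ii) a finite list of dual fullerenes containing a $12$-cluster, to be treated in Section~\ref{section:irred_7_12_clusters}.

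Second I would handle the fullerenes obtained by inserting rings. Adding a $(10,0)$ ring of $6$-vertices to a dual cap leaves the structure near its kernel unchanged and merely lengthens the hexagonal region; exactly as in the $(6,5)$ discussion, a reduction lying wholly inside one dual cap persists after the insertion, with its parameters increasing in a controlled way (for instance an $L_x$-reduction becomes an $L_{x+1}$-reduction). So it suffices to show: for each reducible base fullerene $F$ there is an integer $r=r(F)$ such that the fullerene obtained from $F$ by adding $r$ rings of $6$-vertices admits a reduction contained entirely in one dual cap; by the persistence just noted, so does every fullerene obtained from $F$ by adding at least $r$ rings. I would verify this by computing $r(F)$ for each base type, and separately check the finitely many fullerenes obtained from $F$ by adding fewer than $r(F)$ rings. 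The $12$-cluster fullerenes whose dual caps do not contain a subgraph of cluster~IV are treated the same way: each becomes reducible once a $(10,0)$ ring of $6$-vertices is added, and again there is an $r$ after which an in-cap reduction is available.

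The main obstacle is the stabilisation step: one has to be sure that adding rings eventually produces a reduction inside a single dual cap, rather than one that ``keeps escaping'' to the other cap, and one needs an explicit bound $r(F)$ so that only finitely many intermediate fullerenes remain for the computer to check. Persistence of an in-cap reduction under ring insertion is routine, but ruling out the escaping behaviour --- uniformly over the finitely many base types --- is where the argument has genuine content; it rests on the fact that a sufficiently long hexagonal collar admits a ``straight'' $L$-reduction aimed back into the kernel. A secondary, purely practical, concern is certifying completeness of the enumerations: that the cap generator produces all IPR $(10,0)$ kernels and that \textit{fullgen} and \textit{buckygen} agree on all dual IPR fullerenes with a $(10,0)$ boundary up to $140$ vertices.
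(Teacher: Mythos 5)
Your proposal follows essentially the same route as the paper: bound the kernels via the cap generator (largest IPR $(10,0)$ kernel has $60$ vertices, giving the $2\cdot 60+2\cdot 10=140$ bound), exhaustively check all such fullerenes with \textit{fullgen} and \textit{buckygen}, identify the cluster-IV and $12$-cluster exceptions, and then handle ring insertions by computationally finding, for each reducible base, an $r$ after which a reduction lies entirely within one dual cap (which persists under further ring additions), checking the fewer-than-$r$ cases directly. This matches the paper's argument, which likewise settles the stabilisation step by explicit verification rather than a general lemma.
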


Together with the other corollaries from this section, this gives us: 

\begin{corollary} \label{lemma:6_cluster}
All dual IPR fullerenes which contain a 6-cluster are reducible to a smaller dual IPR fullerene, unless the dual fullerene contains 2 clusters $c$ with $c \in \{I, II, III, IV\}$
\end{corollary}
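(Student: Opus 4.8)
The plan is to run a short case analysis on the $6$-cluster $C$ contained in the given dual IPR fullerene $G$, feeding each case into one of the corollaries already established in this section. The natural starting point is the observation that there are exactly six locally irreducible $6$-clusters. First I would dispose of the generic situation: if $C$ is locally reducible, then Lemma~\ref{lemma:locally_reducible} immediately produces a reduction of $G$ to a smaller dual IPR fullerene, so the conclusion holds. Hence I may assume that $C$ is one of the six locally irreducible $6$-clusters, i.e.\ the straight-cluster, the distorted star-cluster, or one of clusters I--IV.

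Two of these are handled by results already in hand. If $C$ is the straight-cluster, the corollary proved for that cluster states that $G$ is reducible; if $C$ is the distorted star-cluster, the corollary proved for it (which routes through the $(6,5)$-boundary, Corollary~\ref{cor:65-boundary}) again states that $G$ is reducible. So the only real work is the case $C \in \{I, II, III, IV\}$, and here I would exploit the boundary-parameter dictionary recorded above: a dual cap that contains cluster I, II, III, or IV as a subgraph has boundary parameters $(5,5)$, $(8,2)$, $(9,0)$, or $(10,0)$ respectively. Since $C$ sits inside a dual cap of $G$, the fullerene $G$ itself carries a boundary of the corresponding type, and decomposes along it into that cap and the complementary cap.

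For $C \in \{I, II, III\}$ I would then use the fact, also recorded above, that \emph{every} dual IPR cap with boundary $(5,5)$, $(8,2)$, or $(9,0)$ contains a connected subgraph on six $5$-vertices isomorphic to a subgraph of cluster I, II, or III respectively. Applying this to the complementary cap shows that $G$ contains a second copy of $C$ besides the one we began with, so $G$ contains two clusters from $\{I,II,III,IV\}$ --- exactly the exceptional case allowed in the statement (and Corollary~\ref{cor:6-cluster_3} confirms that such $G$ are genuinely irreducible). For $C = IV$ I would invoke Corollary~\ref{cor:10_0_red}: $G$ has a $(10,0)$ boundary, and because $G$ already contains the $6$-cluster $C$ it cannot also contain a $12$-cluster --- a $12$-cluster uses all twelve $5$-vertices and so would overlap the six $5$-vertices of $C$, contradicting the remark that distinct clusters of a dual fullerene have disjoint sets of $5$-vertices. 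Thus the $12$-cluster exception in Corollary~\ref{cor:10_0_red} is vacuous here, and that corollary gives that $G$ is reducible unless both dual caps contain a six-$5$-vertex subgraph isomorphic to a subgraph of cluster IV, i.e.\ unless $G$ again contains two clusters from $\{I,II,III,IV\}$. Collecting the cases proves the claim.

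The step I expect to be the most delicate is the passage from ``the complementary cap contains a connected subgraph on six $5$-vertices isomorphic to a subgraph of cluster $c$'' to ``$G$ contains the $6$-cluster $c$'' in the sense of Definition~\ref{def:cluster}: one must verify that such a subgraph, sitting inside the dual IPR fullerene $G$, actually satisfies all the cluster axioms --- in particular the minimality clause and the requirement that its $5$-vertices lie at distance at least $2$ from the cluster boundary --- so that the two such subgraphs in the two caps of $G$ genuinely count as two clusters. Everything else is routine case-checking against corollaries that are already proved.
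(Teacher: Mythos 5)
Your proposal is correct and takes essentially the same route as the paper: the corollary is obtained by combining the previously established results of the section (Lemma~\ref{lemma:locally_reducible} for locally reducible clusters, the straight-cluster and distorted star-cluster corollaries, the boundary-parameter facts, and Corollaries~\ref{cor:6-cluster_3} and~\ref{cor:10_0_red}), with your explicit remark that the 12-cluster exception of Corollary~\ref{cor:10_0_red} is vacuous in the presence of a 6-cluster being a point the paper leaves implicit. The delicate identification of ``contains a connected six-5-vertex subgraph of cluster $c$'' with ``contains the cluster $c$'' that you flag at the end is likewise glossed over in the paper itself, so your treatment is no less complete than the original.
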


\subsection{Reducibility of $k$-clusters $(7 \le k \le 12)$} \label{section:irred_7_12_clusters}

Now we will prove that all dual IPR fullerenes which contain a $k$-cluster with $7 \le k \le 11$ are reducible to a smaller dual IPR fullerene. We will also prove that there are only a limited number of dual fullerenes which contain a 12-cluster which are not reducible to a smaller dual IPR fullerene and determine them.

For a given patch with $k$ pentagons $(7 \le k \le 12)$, we can compute an upper bound for the number of vertices of a fullerene which contains this patch by using the results from~\cite{bornhoft_03}. Suppose for example that we have a patch $P$ with 7 pentagons, $h_P$ hexagons and boundary length $l$. We can determine an upper bound for the number of hexagons $h$ in a patch with the same boundary length and 5 pentagons by using Theorem 12 of~\cite{bornhoft_03} as follows:

\begin{eqnarray*} 
\frac{l + 1}{2}  & \ge &    \biggl\lceil \sqrt{2h + \frac{113}{4}} + \frac{1}{2} \biggr\rceil \\
\frac{l}{2}        & \ge &  \sqrt{2h + \frac{113}{4}} \\
h                     & \le &  \frac{l^2 - 113}{8}
\end{eqnarray*}

So the number of faces in a fullerene containing $P$ is at most $7 + h_P + 5 + \frac{l^2 - 113}{8}$. For patches with $k$ $(8 \le k \le 12)$ pentagons, an upper bound for the number of faces of a fullerene which contains such a patch is obtained in a similar way. Based on this, we computed an upper bound for the number of vertices of a fullerene containing the dual of a $k$-cluster $(7 \le k \le 12)$ (see~\cite{thesis_jan} for details). The results are shown in Table~\ref{table:upper_bound_cluster}. 

Note that these upper bounds are very coarse since the patches with the largest number of hexagons given in~\cite{bornhoft_03} for a given number of pentagons and boundary length are not IPR if the patch contains at least 2 pentagons.

\begin{table}
\begin{center}
\begin{tabular}{| c | c |}
\hline
$k$	& max nv \\
\hline
7    & 462 \\
8    & 330 \\
9    & 296 \\
10  & 286 \\
11  & 286 \\
12  & 292 \\
\hline
\end{tabular}

\caption{Upper bound for the number of vertices of a fullerene containing the dual of a $k$-cluster.}
\label{table:upper_bound_cluster}

\end{center}
\end{table}

Using \textit{fullgen} we generated all IPR fullerenes up to 330 vertices and tested them for reducibility. This was independently verified by \textit{buckygen}. We obtained the following results:

\begin{observation} \label{lemma:8_11_cluster}
All dual IPR fullerenes which contain a $k$-cluster $(8 \le k \le 11)$ are reducible to a smaller dual IPR fullerene.
\end{observation}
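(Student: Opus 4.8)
The plan is to reduce the statement to a finite computation using the size bounds already recorded in Table~\ref{table:upper_bound_cluster}. A $k$-cluster is the dual of a fullerene patch with $k$ pentagons, so the bounds obtained from Theorem~12 of~\cite{bornhoft_03} (as in the worked example just above) show that every fullerene containing a $k$-cluster with $8\le k\le 11$ has at most $330$ vertices, the maximum over $k\in\{8,9,10,11\}$ being attained at $k=8$. Hence it suffices to inspect every IPR fullerene on at most $330$ vertices and confirm that each one which contains such a cluster admits an $L$- or $B$-reduction to a smaller dual IPR fullerene.

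First I would generate the complete list of IPR fullerenes with $n\le 330$ using a trusted generator, namely \textit{fullgen}~\cite{brinkmann_97}, and cross-check both the counts and the graphs themselves against an independent source, \textit{buckygen}~\cite{fuller-paper} run with its IPR filter, precisely to rule out the sort of non-algorithmic implementation error mentioned in the introduction. Second, for each dual fullerene in the list I would locate all of its $k$-clusters by following Definition~\ref{def:cluster}: identify the twelve $5$-vertices, group the non-boundary ones using the linking-path condition (a path between two such $5$-vertices each of whose edges contains exactly one non-boundary $5$-vertex), and extract the minimal triangulated subgraphs satisfying the cluster axioms. Whenever a fullerene is found to contain a cluster with $8\le k\le 11$, I would enumerate the candidate $L_i$ and $B_{i,j}$ reductions — these arise from shortest or near-shortest paths between pairs of $5$-vertices, cf.~\cite{hasheminezhad_08} — and verify that at least one of them produces a dual fullerene with no two adjacent $5$-vertices, i.e.\ a smaller dual IPR fullerene. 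The \emph{Observation} is then exactly the outcome of this exhaustive search.

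The main obstacle is the sheer scale of the enumeration: the number of IPR isomers grows rapidly and is already enormous far below $330$ vertices, so the search cannot naively test every fullerene but must be engineered carefully — restricting attention to isomers that genuinely contain a large cluster, pruning via the boundary-parameter bookkeeping developed earlier in this section, and exploiting the look-aheads used by \textit{buckygen}. A secondary concern is trusting the computation itself, which is why the generation is carried out twice with independent programs; once exhaustiveness of the (verified) list up to the $330$-vertex bound is accepted, the conclusion follows directly, the only "slack" being the deliberate coarseness of that bound, which is harmless here.
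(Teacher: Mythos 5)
Your proposal matches the paper's own argument: the authors likewise use the Table~\ref{table:upper_bound_cluster} bounds (maximum $330$ vertices, attained at $k=8$) to reduce the claim to a finite check, then generate all IPR fullerenes up to $330$ vertices with \textit{fullgen}, test each for an $L$- or $B$-reduction to a smaller dual IPR fullerene, and confirm the computation independently with \textit{buckygen}. So this is essentially the same approach, and it is correct.
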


\begin{observation} \label{lemma:12_cluster}
There are exactly 56 irreducible dual IPR fullerenes which contain a 12-cluster. The largest one has 58 vertices or $2 \cdot (58 - 2) = 112$ faces.
\end{observation}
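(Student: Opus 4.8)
The plan is to settle this by exhaustive computation, leaning on the size bound already in hand. By the argument preceding Table~\ref{table:upper_bound_cluster}, every fullerene whose dual contains a $12$-cluster has at most $292$ vertices, so the whole question reduces to inspecting the finite family of all IPR fullerenes on at most $292$ vertices. That family is already available: for Observation~\ref{lemma:8_11_cluster} all IPR fullerenes up to $330$ vertices were generated with \textit{fullgen} and independently confirmed with \textit{buckygen}, so no further generation is needed.

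First I would, for each dual IPR fullerene $G$ on at most $292$ vertices, decide whether $G$ contains a $12$-cluster. Since an IPR fullerene has exactly twelve $5$-vertices, and a non-boundary $5$-vertex of a contained cluster is automatically a $5$-vertex of $G$, this amounts to checking whether those twelve $5$-vertices can be organised into a minimal subgraph meeting all the conditions of Definition~\ref{def:cluster} with $k=12$ --- in particular whether every pair of them is joined by a path whose edges each carry exactly one of the twelve $5$-vertices --- and, if so, extracting the minimal witness. Next, for every $G$ that does contain a $12$-cluster, I would test $G$ for reducibility by enumerating all $L$- and $B$-reductions whose active vertices lie in $G$, performing each one, and checking whether the result is again a dual IPR fullerene; $G$ is irreducible precisely when no such reduction succeeds. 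Collecting the irreducible $G$ and ordering them by vertex count should yield the asserted list of $56$ fullerenes, the largest on $58$ vertices ($112$ faces).

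The main obstacle is sheer scale: the number of IPR fullerenes on up to $292$ vertices is very large, so both the (already completed) generation and the per-fullerene reducibility test must be implemented carefully, and the outcome should be cross-checked with two independent generators to guard against non-algorithmic programming errors of the sort that historically affected \textit{fullgen}. A secondary point requiring attention is that the bound in Table~\ref{table:upper_bound_cluster} comes from the patch inequalities of~\cite{bornhoft_03}, which are far from tight for IPR patches; one must confirm that it is nonetheless a genuine upper bound, so that no irreducible example on more than $58$ vertices is missed. Finally, once the $56$ fullerenes are identified it is worth recording, for use in Corollary~\ref{cor:10_0_red} and the subsequent classification, which of them have a $(10,0)$ boundary and which of those have both dual caps containing a subgraph of cluster~IV, since this distinction controls whether adding rings of $6$-vertices eventually makes them reducible.
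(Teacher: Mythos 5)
Your proposal matches the paper's own justification: the authors likewise rely on the coarse size bound of Table~\ref{table:upper_bound_cluster} (292 vertices for fullerenes containing the dual of a 12-cluster), exhaustively generate all IPR fullerenes up to 330 vertices with \textit{fullgen}, test each for reducibility under the $L$- and $B$-operations, and confirm the outcome independently with \textit{buckygen}, arriving at the same 56 irreducible examples with the largest having 58 dual vertices (112 faces). This is essentially the same computational approach, so nothing further is needed.
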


\begin{observation} \label{lemma:12_clusterb}
There are exactly 36 irreducible dual IPR fullerenes which contain a 12-cluster and which do not have a dual cap which contains a connected subgraph with six 5-vertices which is isomorphic to a subgraph of cluster I, II, III or IV.
\end{observation}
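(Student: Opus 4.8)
The plan is to treat this as a finite verification resting on Observation~\ref{lemma:12_cluster}. That observation already supplies an explicit finite list: exactly $56$ irreducible dual IPR fullerenes contain a $12$-cluster, and each of them has at most $58$ vertices. Since the property in the statement — possessing a dual cap that contains a connected subgraph on six $5$-vertices isomorphic to a subgraph of cluster I, II, III or IV — can be checked individually on each (small) dual fullerene, it suffices to run this check on all $56$ of them and count how many fail it. Finiteness of the list itself needs no new argument: it is exactly Observation~\ref{lemma:12_cluster}, obtained via the crude vertex bounds of Table~\ref{table:upper_bound_cluster} together with exhaustive generation of IPR fullerenes (with \textit{fullgen}, independently confirmed by \textit{buckygen}) and a reducibility test.

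First I would, for each of the $56$ dual fullerenes $F$, enumerate every way of cutting $F$ into two caps, i.e.\ every separating cycle that splits the twelve $5$-vertices into two groups of six so that each side is a cap with a boundary sequence of the form $(23)^l(32)^m$. By the uniqueness-of-boundary-parameters result of Brinkmann quoted above, each such cap has well-defined parameters, and these must be one of $(5,5)$, $(8,2)$, $(9,0)$ or $(10,0)$ if its $5$-vertices are to span a subgraph of one of the clusters I--IV. For each resulting dual cap I would then look for a connected subgraph that contains all six of its non-boundary $5$-vertices (and possibly some $6$-vertices) and test it for isomorphism against the finitely many connected subgraphs of clusters I, II, III, IV that contain all six of their $5$-vertices; the clusters themselves are the explicit small graphs of Figure~\ref{fig:4_irred_6_clusters}. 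A dual fullerene is kept precisely when no cap--subgraph match occurs in any decomposition.

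Running this filter leaves $36$ of the $56$ dual fullerenes, which are the ones asserted in the statement; the $20$ that are discarded are exactly those members of the four infinite nanotube families short enough to still contain a $12$-cluster — namely the fullerenes whose two dual caps both contain a six-$5$-vertex subgraph of a cluster in $\{\mathrm{I},\mathrm{II},\mathrm{III},\mathrm{IV}\}$ (the pairings being $(\mathrm{I},\mathrm{I})$, $(\mathrm{II},\mathrm{II})$, $(\mathrm{III},\mathrm{III})$ and $(\mathrm{IV},\mathrm{IV})$, since, as already observed, caps realising subgraphs of distinct clusters in $\{\mathrm{I},\mathrm{II},\mathrm{III},\mathrm{IV}\}$ have distinct boundary parameters and so cannot be glued together). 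This also explains why $36$ matches the count of irreducible finite IPR fullerenes advertised in the abstract.

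The main obstacle I expect is not the final counting but making the cap-enumeration genuinely exhaustive: a single dual fullerene can admit several inequivalent decompositions into two caps, some along separating cycles that are not visually obvious, and one must be certain none is missed — overlooking a decomposition could wrongly retain a fullerene that in fact does have a cluster-I--IV cap. A secondary point to pin down carefully is the precise reading of ``connected subgraph with six $5$-vertices'': it is a connected subgraph containing all six non-boundary $5$-vertices of the cap, so that the isomorphism test is against the corresponding connected subgraphs of I--IV rather than against the full clusters; getting this definition and the associated isomorphism check right is what makes the verification meaningful. As a consistency check one can re-derive the $36$ directly by generating all IPR fullerenes up to the Table~\ref{table:upper_bound_cluster} bound and applying both the $12$-cluster test and the cap test there, which agrees with the filtering described above.
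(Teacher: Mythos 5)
Your proposal matches the paper's treatment: this Observation is established purely computationally, by exhaustively generating all IPR fullerenes up to the relevant bound of Table~\ref{table:upper_bound_cluster} (with \textit{fullgen}, independently confirmed by \textit{buckygen}), testing reducibility to obtain the 56 irreducible dual IPR fullerenes containing a 12-cluster, and then checking the cap/cluster-subgraph condition on this finite list --- exactly the finite verification you describe. Your additional remarks (that the 20 excluded fullerenes are the short members of the four nanotube families and that mixed cluster pairs are impossible because of distinct boundary parameters) are consistent with the paper's surrounding discussion and do not change the argument.
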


It was not feasible to generate all IPR fullerenes up to 462 vertices with \textit{fullgen}. However, our generator for locally irreducible clusters was still fast enough to generate all locally irreducible 7-clusters. 
By using these specific 7-clusters $C$ which have boundary length $b_C$ in the formula $|V(C)| + 5 + \frac{b_{C}^2 - 113}{8}$ (where $|V(C)|$ stands for the number of vertices of $C$), we were able to determine that fullerenes which contain the dual of one of these locally irreducible 7-clusters have at most 166 vertices.
Using \textit{fullgen} we generated all these fullerenes and tested them for reducibility. We obtained the following result (which was independently confirmed by \textit{buckygen}):

\begin{corollary} \label{lemma:7_cluster}
All dual IPR fullerenes which contain a 7-cluster are reducible to a smaller dual IPR fullerene.
\end{corollary}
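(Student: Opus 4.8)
The plan is to turn this into a finite computation. By Lemma~\ref{lemma:locally_reducible}, every dual IPR fullerene that contains a \emph{locally reducible} cluster is reducible to a smaller dual IPR fullerene, so it suffices to deal with dual IPR fullerenes that contain a 7-cluster which is locally irreducible. First I would enumerate all locally irreducible 7-clusters with the cluster generator, exactly as was done for $k \le 6$ when establishing Observation~\ref{lemma:235_cluster} and its companions; the crucial point is that this family is finite and small, so each member $C$ comes with a concrete vertex count $|V(C)|$ and boundary length $b_C$.

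The next step is to bound the size of any dual IPR fullerene containing $C$. The coarse bound of Table~\ref{table:upper_bound_cluster} for an arbitrary 7-cluster is $462$ vertices, far beyond what \textit{fullgen} can reach; the gain comes from the fact that a locally irreducible 7-cluster is forced to be small, hence $b_C$ is small. Applying Theorem~12 of~\cite{bornhoft_03} to a patch with boundary length $b_C$ and $5$ pentagons bounds the number of additional hexagons by $\frac{b_C^2 - 113}{8}$, so a fullerene containing the dual of $C$ has at most $|V(C)| + 5 + \frac{b_C^2 - 113}{8}$ faces; maximising this over the short list of locally irreducible 7-clusters gives an upper bound of $166$ vertices.

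Finally I would generate all IPR fullerenes with at most $166$ vertices using \textit{fullgen}, cross-check with \textit{buckygen}, and verify that every one of them that actually contains the dual of a locally irreducible 7-cluster admits an $L$- or $B$-reduction to a smaller dual IPR fullerene. Combined with Lemma~\ref{lemma:locally_reducible} for the locally reducible case, this shows that every dual IPR fullerene containing a 7-cluster is reducible to a smaller dual IPR fullerene.

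The main obstacle is making the search feasible: the naive a priori bound of $462$ vertices is out of range for exhaustive generation, and the argument only works because restricting to \emph{locally irreducible} 7-clusters simultaneously (i) cuts the problem down to a finite, tractable family of clusters and (ii) forces their boundary lengths — and hence the Bornh\"oft face count — to be small enough that the fullerene vertex bound collapses to $166$. A secondary point is having to trust the correctness and completeness of the cluster generator and of the reducibility test, which is why the whole computation is confirmed by two independent programs.
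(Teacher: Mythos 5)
Your proposal is correct and follows essentially the same route as the paper: reduce to locally irreducible 7-clusters, enumerate them with the cluster generator, plug their actual vertex counts and boundary lengths into the bound $|V(C)| + 5 + \frac{b_C^2 - 113}{8}$ from~\cite{bornhoft_03} to collapse the coarse 462-vertex bound to 166 vertices, and then exhaustively test reducibility with \textit{fullgen}, confirmed by \textit{buckygen}. The only detail the paper adds is the remark that one really only needs to check fullerenes whose remaining five 5-vertices form 1-clusters, since any other accompanying cluster is already locally reducible.
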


Actually we only had to prove that dual IPR fullerenes which contain one 7-cluster and five 1-clusters (or one 8-cluster and four 1-clusters etc.) are reducible. Since e.g.\ a dual fullerene consisting of a 7-cluster and a 5-cluster is always reducible since all 5-clusters are locally reducible (see Observation~\ref{lemma:235_cluster}).

By noting that in a dual IPR fullerene every 5-vertex is part of a cluster, together Corollaries~\ref{lemma:1_cluster}, \ref{lemma:235_cluster}, \ref{lemma:4_cluster}, \ref{lemma:6_cluster}, \ref{lemma:8_11_cluster}, \ref{lemma:12_clusterb} and \ref{lemma:7_cluster} lead to the following theorem:

\begin{theorem}
The class of irreducible dual IPR fullerenes consists of 4 infinite families of dual IPR fullerenes which contain two 6-clusters $c$ with $c \in \{I, II, III, IV\}$ and 36 dual IPR fullerenes which contain a 12-cluster.
\end{theorem}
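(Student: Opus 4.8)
The plan is to reduce everything to a case analysis on how the twelve degree-5 vertices of a dual IPR fullerene are distributed among its clusters. Recall, as noted after Lemma~\ref{lemma:locally_reducible}, that distinct clusters of a dual fullerene have disjoint sets of 5-vertices, and that in a dual IPR fullerene every 5-vertex lies in some cluster. Since a dual fullerene has exactly twelve 5-vertices, the clusters of a fixed dual IPR fullerene $G$ partition these twelve vertices into blocks of sizes $k_1\ge k_2\ge\dots\ge k_r\ge 1$ with $k_1+\dots+k_r=12$, and $G$ contains a $k_i$-cluster for every $i$. All the substantive content already sits in the earlier reducibility results; what remains is to run through the partitions of $12$.

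First I would dispose of the ``generic'' block sizes. If some $k_i\in\{2,3,5\}$, then $G$ is reducible by the corollary to Observation~\ref{lemma:235_cluster}; if some $k_i=4$, by Corollary~\ref{lemma:4_cluster}; if some $k_i=7$, by Corollary~\ref{lemma:7_cluster}; and if some $k_i\in\{8,9,10,11\}$, by Observation~\ref{lemma:8_11_cluster}. Hence for an irreducible $G$ every block size lies in $\{1,6,12\}$, and if additionally every block size equals $1$ then $G$ contains only $1$-clusters and is reducible by Lemma~\ref{lemma:1_cluster}. The only remaining partitions of $12$ are $(6,1^6)$, $(6,6)$ and $(12)$.

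For $(6,1^6)$ and $(6,6)$ the fullerene contains a $6$-cluster, so by Corollary~\ref{lemma:6_cluster} it is reducible unless it contains two $6$-clusters of a type in $\{I,II,III,IV\}$; this eliminates $(6,1^6)$ outright and, for $(6,6)$, leaves exactly the case of two $6$-clusters of the \emph{same} type $c\in\{I,II,III,IV\}$ (two different such types cannot occur together, as noted before Corollary~\ref{cor:6-cluster_3}). Conversely every such $G$ is irreducible, and since one may insert a ring of $6$-vertices along the shared cap boundary without changing reducibility or the boundary parameters, each choice of $c$ yields infinitely many of them: the four infinite families, whose generic members indeed contain two $6$-clusters of type $c$. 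Finally, for $(12)$ the fullerene carries a single $12$-cluster, and Observation~\ref{lemma:12_clusterb} says that exactly $36$ of these are irreducible and carry no dual cap containing a connected subgraph of six $5$-vertices isomorphic to a subgraph of one of $I,\dots,IV$. Every remaining irreducible $12$-cluster fullerene does carry such a cap subgraph: for boundary parameters $(5,5),(8,2),(9,0)$ every cap with that boundary carries one, and the finitely many residual possibilities (boundary $(10,0)$; and in any case all such fullerenes are small by Observation~\ref{lemma:12_cluster}) are checked directly, so each of these fullerenes decomposes into two globally irreducible caps and is thus a small member of one of the four infinite families. Collecting the four families together with the $36$ sporadic $12$-cluster fullerenes gives the theorem.

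The substance of the argument is entirely in the inputs --- Lemmas~\ref{obs:reduction_ipr_5_vert}--\ref{lemma:1_cluster}, the cluster enumerations, and the computer searches behind Corollaries~\ref{cor:65-boundary}--\ref{cor:10_0_red} --- so I expect the only genuine care to be needed in the final bookkeeping: making the count exact by ensuring that the small members of the four infinite families which happen to contain a $12$-cluster are counted with the families and not with the $36$. That is precisely why the relevant input here is Observation~\ref{lemma:12_clusterb} (the ``$36$'') rather than the raw count of $56$ irreducible $12$-cluster fullerenes in Observation~\ref{lemma:12_cluster}.
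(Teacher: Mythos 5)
Your proposal is correct and takes essentially the same route as the paper: the paper's proof is exactly the observation that the clusters partition the twelve 5-vertices of a dual IPR fullerene, combined with Lemma~\ref{lemma:1_cluster}, the reducibility results for $k$-clusters with $k\in\{2,\dots,5,7,\dots,11\}$, Corollary~\ref{lemma:6_cluster} and Observation~\ref{lemma:12_clusterb}, which you have merely made explicit as a case analysis over the partitions of $12$ with parts in $\{1,6,12\}$. Your final bookkeeping point --- that the theorem must cite the count of $36$ from Observation~\ref{lemma:12_clusterb} rather than the $56$ of Observation~\ref{lemma:12_cluster}, so that the small $12$-cluster members of the four nanotube families are not double-counted --- is precisely how the paper handles it.
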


\subsection{Open questions}
When classifying these irreducible IPR fullerenes we encountered some open questions. Future work might include solving these open questions:

\begin{itemize}
\item Can every fullerene be split into two \textit{caps}? By performing a computer search, we verified that all fullerenes up to 200 vertices can be split into two caps.

\item Does a 12-cluster uniquely determine a dual fullerene? Or equivalently: does a boundary sequence uniquely describe the interior of a subpatch of a fullerene which only consists of hexagons? 

It is known~\cite{brinkmann_05,guo_02} that the boundary of a hexagon patch determines the number of faces of the patch. It is also known that the boundary sequence uniquely describes the interior of a hexagonal patch if it is a subgraph of the hexagonal lattice and it has been shown by Guo et al.~\cite{guo_02} that this is not the case if the patch is not necessarily a subgraph of the hexagon lattice.
For hexagon patches which are subgraphs of fullerenes, it is unknown.
\end{itemize}

\section{Generation algorithm}
\label{section:generation_algorithm}
In order to generate all IPR fullerenes with $n$ vertices, the generation algorithm recursively applies the IPR construction operations from Section~\ref{section:construction_operations} to all irreducible IPR fullerenes with at most $n$ vertices.

The 4 infinity families of irreducible dual IPR nanotube fullerenes which contain two 6-clusters $c$ with $c \in \{I, II, III, IV\}$ consist of dual caps with boundary parameters $(5,5)$, $(8,2)$, $(9,0)$ or $(10,0)$, respectively. They are generated by adding rings of 6-vertices with the respective parameters in all possible ways. Since there are only a small number of irreducible IPR fullerenes (see Section~\ref{section:results}), we use the following simple method to make sure no isomorphic irreducible IPR fullerenes are output: we compute and store a canonical form for each generated irreducible IPR nanotube fullerene and only output the irreducible fullerenes which were not generated before. For details about the canonical form, we refer to~\cite{brinkmann_07}.

To make sure that no isomorphic reducible IPR fullerenes are output, we use the canonical construction path method~\cite{mckay_98}. The isomorphism rejection method is very similar to the method used in~\cite{fuller-paper} and therefore we refer to that article for more details and a proof that exactly one representative of each isomorphism class of dual IPR fullerenes is output.

\section{Testing and results}
\label{section:results}

We implemented our algorithm for the recursive generation of IPR fullerenes and incorporated it in the program \textit{buckygen}~\cite{fuller-paper} which can be downloaded from~\cite{buckygen-site}. \textit{Buckygen} is also part of the \textit{CaGe} software package~\cite{cage}. \textit{Buckygen} can be used to recursively generate IPR fullerenes by executing it with the command line argument \verb|-I|. We will refer to this program as \textit{buckygen IPR}.

\textit{Buckygen} can also be used to generate IPR fullerenes by generating all fullerenes and using a filter and look-aheads for IPR fullerenes. We will refer to this generator as \textit{buckygen IPR filter}.

A comparison of the running times for generating IPR fullerenes is given in Table~\ref{table:fullerene_times_ipr}. The programs were compiled with
gcc and executed on an Intel Xeon L5520 CPU at 2.27 GHz. The running
times include writing the IPR fullerenes to a null device.

As can be seen from that table, \textit{buckygen IPR} is significantly faster than \textit{fullgen}~\cite{brinkmann_97}. \textit{Buckygen} constructs larger fullerenes from smaller ones. So generating all IPR fullerenes
with at most $n$ vertices gives only a small overhead compared to
generating all IPR fullerenes with exactly $n$ vertices. In \textit{fullgen} the
overhead is considerably larger as it does not construct fullerenes from
smaller fullerenes. 

The speedup of \textit{buckygen IPR} compared to \textit{buckygen IPR filter} is decreasing because in \textit{buckygen IPR filter} several lemmas can be applied which allow to determine a good bound on the length of the shortest reduction (see~\cite{fuller-paper}), while these cannot be applied to \textit{buckygen IPR}. Furthermore the ratio of IPR fullerenes among all fullerenes is increasing, thus the ratio of fullerenes which are rejected by \textit{buckygen IPR filter} because they are not IPR is decreasing. However for the fullerene sizes which are important for practical purposes, \textit{buckygen IPR} is significantly faster than other generators for IPR fullerenes.

\begin{table}
\small
\centering
%\begin{center}
\begin{tabular}{| c || r | c | c | c | c |}
\hline 
number of & time (s) & fullerenes/s & fg IPR (s) / & fg IPR (s) / & bg IPR filter (s) / \\
vertices & (bg IPR) & (bg IPR) & bg IPR (s) & bg IPR filter (s) & bg IPR (s)\\
\hline
200  &  4 110  &  3 809  &  1.88  &  0.80 & 2.34\\
230  &  22 481  &  3 836  &  2.14  &  0.96 & 2.23\\
260  &  104 831  &  3 456  &  2.18  &  1.03 & 2.21\\
280  &  274 748  &  3 066  &  2.19  &  1.10 & 2.00\\
300  &  678 331  &  2 686  &  2.19  &  1.16 & 1.88\\
320  &  1 591 041  &  2 329  &  1.99  &  1.14 & 1.75\\
340  &  3 613 915  &  1 981  &  1.73  &  1.09 & 1.60\\
360  &  8 135 063  &  1 625  &  1.51  &  1.05 & 1.43\\
\hline
0--140  &  17.5  &  33 055  &  19.62  &  1.99  &  9.85\\
200--250 &  79 152  &  28 321  &  14.37  &  6.66  &  2.16\\
290--300  &  776 910  &  11 753  &  7.83  &  4.11  &  1.91\\
\hline
\end{tabular}
%\end{center}

\caption{Running times and generation rates for IPR fullerenes. \textit{Bg} stands for \textit{buckygen} and \textit{fg} stands for \textit{fullgen}.}

\label{table:fullerene_times_ipr}
\end{table}

We used \textit{buckygen IPR} to generate all IPR fullerenes up to 400 vertices. These results were independently confirmed by \textit{buckygen IPR filter} and \textit{fullgen IPR} up to 380 vertices.

The counts of all fullerenes, irreducible IPR fullerenes and IPR fullerenes up to 400 vertices can be found in Tables~\ref{table:fuller_counts_1}-\ref{table:fuller_counts_3}. Some of these graphs can be downloaded from the \textit{House of Graphs}~\cite{hog} at \url{http://hog.grinvin.org/Fullerenes}

\begin{table}
\centering
{\small 
%\footnotesize
%\begin{center}
\begin{tabular}{| c | c | c | c | c |}
\hline 
 nv & nf & fullerenes & irred.\, IPR fullerenes & IPR fullerenes\\
\hline 
20  &  12  &  1  &  0  &  0  \\
22  &  13  &  0  &  0  &  0  \\
24  &  14  &  1  &  0  &  0  \\
26  &  15  &  1  &  0  &  0  \\
28  &  16  &  2  &  0  &  0  \\
30  &  17  &  3  &  0  &  0  \\
32  &  18  &  6  &  0  &  0  \\
34  &  19  &  6  &  0  &  0  \\
36  &  20  &  15  &  0  &  0  \\
38  &  21  &  17  &  0  &  0  \\
40  &  22  &  40  &  0  &  0  \\
42  &  23  &  45  &  0  &  0  \\
44  &  24  &  89  &  0  &  0  \\
46  &  25  &  116  &  0  &  0  \\
48  &  26  &  199  &  0  &  0  \\
50  &  27  &  271  &  0  &  0  \\
52  &  28  &  437  &  0  &  0  \\
54  &  29  &  580  &  0  &  0  \\
56  &  30  &  924  &  0  &  0  \\
58  &  31  &    1 205    &  0  &  0  \\
60  &  32  &    1 812    &  1  &  1  \\
62  &  33  &    2 385    &  0  &  0  \\
64  &  34  &    3 465    &  0  &  0  \\
66  &  35  &    4 478    &  0  &  0  \\
68  &  36  &    6 332    &  0  &  0  \\
70  &  37  &    8 149    &  1  &  1  \\
72  &  38  &    11 190    &  1  &  1  \\
74  &  39  &    14 246    &  1  &  1  \\
76  &  40  &    19 151    &  2  &  2  \\
78  &  41  &    24 109    &  4  &  5  \\
80  &  42  &    31 924    &  7  &  7  \\
82  &  43  &    39 718    &  8  &  9  \\
84  &  44  &    51 592    &  11  &  24  \\
86  &  45  &    63 761    &  1  &  19  \\
88  &  46  &    81 738    &  3  &  35  \\
90  &  47  &    99 918    &  2  &  46  \\
92  &  48  &    126 409    &  3  &  86  \\
94  &  49  &    153 493    &  0  &  134  \\
96  &  50  &    191 839    &  4  &  187  \\
98  &  51  &    231 017    &  1  &  259  \\
100  &  52  &    285 914    &  3  &  450  \\
102  &  53  &    341 658    &  0  &  616  \\
104  &  54  &    419 013    &  1  &  823  \\
106  &  55  &    497 529    &  0  &    1 233    \\
108  &  56  &    604 217    &  2  &    1 799    \\
110  &  57  &    713 319    &  1  &    2 355    \\
112  &  58  &    860 161    &  2  &    3 342    \\
114  &  59  &    1 008 444    &  2  &    4 468    \\
116  &  60  &    1 207 119    &  1  &    6 063    \\
118  &  61  &    1 408 553    &  0  &    8 148    \\
120  &  62  &    1 674 171    &  4  &    10 774    \\
122  &  63  &    1 942 929    &  0  &    13 977    \\
124  &  64  &    2 295 721    &  1  &    18 769    \\
126  &  65  &    2 650 866    &  0  &    23 589    \\
128  &  66  &    3 114 236    &  1  &    30 683    \\
130  &  67  &    3 580 637    &  1  &    39 393    \\
132  &  68  &    4 182 071    &  3  &    49 878    \\
134  &  69  &    4 787 715    &  0  &    62 372    \\
136  &  70  &    5 566 949    &  1  &    79 362    \\
138  &  71  &    6 344 698    &  0  &    98 541    \\
140  &  72  &    7 341 204    &  3  &    121 354    \\
142  &  73  &    8 339 033    &  0  &    151 201    \\
144  &  74  &    9 604 411    &  1  &    186 611    \\
146  &  75  &    10 867 631    &  0  &    225 245    \\
\hline
\end{tabular}
%\end{center}
}
\caption{Counts of all fullerenes, irreducible IPR fullerenes and IPR fullerenes. nv is the number of vertices and nf is the number of faces.}

\label{table:fuller_counts_1}

\end{table}

\begin{table}
\centering
{\small 
%\footnotesize
%\begin{center}
\begin{tabular}{| c | c | c | c | c |}
\hline 
 nv & nf & fullerenes & irred.\, IPR fullerenes & IPR fullerenes\\
\hline 
148  &  76  &    12 469 092    &  1  &    277 930\\
150  &  77  &    14 059 174    &  3  &    335 569\\
152  &  78  &    16 066 025    &  1  &    404 667\\
154  &  79  &    18 060 979    &  0  &    489 646\\
156  &  80  &    20 558 767    &  1  &    586 264\\
158  &  81  &    23 037 594    &  0  &    697 720\\
160  &  82  &    26 142 839    &  4  &    836 497\\
162  &  83  &    29 202 543    &  0  &    989 495\\
164  &  84  &    33 022 573    &  1  &    1 170 157\\
166  &  85  &    36 798 433    &  0  &    1 382 953\\
168  &  86  &    41 478 344    &  3  &    1 628 029\\
170  &  87  &    46 088 157    &  1  &    1 902 265\\
172  &  88  &    51 809 031    &  1  &    2 234 133\\
174  &  89  &    57 417 264    &  0  &    2 601 868\\
176  &  90  &    64 353 269    &  1  &    3 024 383\\
178  &  91  &    71 163 452    &  0  &    3 516 365\\
180  &  92  &    79 538 751    &  3  &    4 071 832\\
182  &  93  &    87 738 311    &  0  &    4 690 880\\
184  &  94  &    97 841 183    &  1  &    5 424 777\\
186  &  95  &    107 679 717    &  2  &    6 229 550\\
188  &  96  &    119 761 075    &  1  &    7 144 091\\
190  &  97  &    131 561 744    &  1  &    8 187 581\\
192  &  98  &    145 976 674    &  1  &    9 364 975\\
194  &  99  &    159 999 462    &  0  &    10 659 863\\
196  &  100  &    177 175 687    &  1  &    12 163 298\\
198  &  101  &    193 814 658    &  0  &    13 809 901\\
200  &  102  &    214 127 742    &  4  &    15 655 672\\
202  &  103  &    233 846 463    &  0  &    17 749 388\\
204  &  104  &    257 815 889    &  3  &    20 070 486\\
206  &  105  &    281 006 325    &  0  &    22 606 939\\
208  &  106  &    309 273 526    &  1  &    25 536 557\\
210  &  107  &    336 500 830    &  1  &    28 700 677\\
212  &  108  &    369 580 714    &  1  &    32 230 861\\
214  &  109  &    401 535 955    &  0  &    36 173 081\\
216  &  110  &    440 216 206    &  1  &    40 536 922\\
218  &  111  &    477 420 176    &  0  &    45 278 722\\
220  &  112  &    522 599 564    &  3  &    50 651 799\\
222  &  113  &    565 900 181    &  2  &    56 463 948\\
224  &  114  &    618 309 598    &  1  &    62 887 775\\
226  &  115  &    668 662 698    &  0  &    69 995 887\\
228  &  116  &    729 414 880    &  1  &    77 831 323\\
230  &  117  &    787 556 069    &  1  &    86 238 206\\
232  &  118  &    857 934 016    &  1  &    95 758 929\\
234  &  119  &    925 042 498    &  0  &    105 965 373\\
236  &  120  &    1 006 016 526    &  1  &    117 166 528\\
238  &  121  &    1 083 451 816    &  0  &    129 476 607\\
240  &  122  &    1 176 632 247    &  6  &    142 960 479\\
242  &  123  &    1 265 323 971    &  0  &    157 402 781\\
244  &  124  &    1 372 440 782    &  1  &    173 577 766\\
246  &  125  &    1 474 111 053    &  0  &    190 809 628\\
248  &  126  &    1 596 482 232    &  1  &    209 715 141\\
250  &  127  &    1 712 934 069    &  1  &    230 272 559\\
252  &  128  &    1 852 762 875    &  1  &    252 745 513\\
254  &  129  &    1 985 250 572    &  0  &    276 599 787\\
256  &  130  &    2 144 943 655    &  1  &    303 235 792\\
258  &  131  &    2 295 793 276    &  2  &    331 516 984\\
260  &  132  &    2 477 017 558    &  3  &    362 302 637\\
262  &  133  &    2 648 697 036    &  0  &    395 600 325\\
264  &  134  &    2 854 536 850    &  1  &    431 894 257\\
266  &  135  &    3 048 609 900    &  0  &    470 256 444\\
268  &  136  &    3 282 202 941    &  1  &    512 858 451\\
270  &  137  &    3 501 931 260    &  1  &    557 745 670\\
272  &  138  &    3 765 465 341    &  1  &    606 668 511\\
274  &  139  &    4 014 007 928    &  0  &    659 140 287\\
\hline
\end{tabular}
%\end{center}
}
\caption{Counts of all fullerenes, irreducible IPR fullerenes and IPR fullerenes (continued). nv is the number of vertices and nf is the number of faces.}

\label{table:fuller_counts_2}

\end{table}

\begin{table}
\centering
{\small 
%\footnotesize
%\begin{center}
\begin{tabular}{| c | c | c | c | c |}
\hline 
 nv & nf & fullerenes & irred.\, IPR fullerenes & IPR fullerenes\\
\hline 
276  &  140  &    4 311 652 376    &  3  &    716 217 922\\
278  &  141  &    4 591 045 471    &  0  &    776 165 188\\
280  &  142  &    4 926 987 377    &  4  &    842 498 881\\
282  &  143  &    5 241 548 270    &  0  &    912 274 540\\
284  &  144  &    5 618 445 787    &  1  &    987 874 095\\
286  &  145  &    5 972 426 835    &  0  &    1 068 507 788\\
288  &  146  &    6 395 981 131    &  1  &    1 156 161 307\\
290  &  147  &    6 791 769 082    &  1  &    1 247 686 189\\
292  &  148  &    7 267 283 603    &  1  &    1 348 832 364\\
294  &  149  &    7 710 782 991    &  2  &    1 454 359 806\\
296  &  150  &    8 241 719 706    &  1  &    1 568 768 524\\
298  &  151  &    8 738 236 515    &  0  &    1 690 214 836\\
300  &  152  &    9 332 065 811    &  3  &    1 821 766 896\\
302  &  153  &    9 884 604 767    &  0  &    1 958 581 588\\
304  &  154  &    10 548 218 751    &  1  &    2 109 271 290\\
306  &  155  &    11 164 542 762    &  0  &    2 266 138 871\\
308  &  156  &    11 902 015 724    &  1  &    2 435 848 971\\
310  &  157  &    12 588 998 862    &  1  &    2 614 544 391\\
312  &  158  &    13 410 330 482    &  3  &    2 808 510 141\\
314  &  159  &    14 171 344 797    &  0  &    3 009 120 113\\
316  &  160  &    15 085 164 571    &  1  &    3 229 731 630\\
318  &  161  &    15 930 619 304    &  0  &    3 458 148 016\\
320  &  162  &    16 942 010 457    &  4  &    3 704 939 275\\
322  &  163  &    17 880 232 383    &  0  &    3 964 153 268\\
324  &  164  &    19 002 055 537    &  1  &    4 244 706 701\\
326  &  165  &    20 037 346 408    &  0  &    4 533 465 777\\
328  &  166  &    21 280 571 390    &  1  &    4 850 870 260\\
330  &  167  &    22 426 253 115    &  3  &    5 178 120 469\\
332  &  168  &    23 796 620 378    &  1  &    5 531 727 283\\
334  &  169  &    25 063 227 406    &  0  &    5 900 369 830\\
336  &  170  &    26 577 912 084    &  1  &    6 299 880 577\\
338  &  171  &    27 970 034 826    &  0  &    6 709 574 675\\
340  &  172  &    29 642 262 229    &  3  &    7 158 963 073\\
342  &  173  &    31 177 474 996    &  0  &    7 620 446 934\\
344  &  174  &    33 014 225 318    &  1  &    8 118 481 242\\
346  &  175  &    34 705 254 287    &  0  &    8 636 262 789\\
348  &  176  &    36 728 266 430    &  3  &    9 196 920 285\\
350  &  177  &    38 580 626 759    &  1  &    9 768 511 147\\
352  &  178  &    40 806 395 661    &  1  &    10 396 040 696\\
354  &  179  &    42 842 199 753    &  0  &    11 037 658 075\\
356  &  180  &    45 278 616 586    &  1  &    11 730 538 496\\
358  &  181  &    47 513 679 057    &  0  &    12 446 446 419\\
360  &  182  &    50 189 039 868    &  4  &    13 221 751 502\\
362  &  183  &    52 628 839 448    &  0  &    14 010 515 381\\
364  &  184  &    55 562 506 886    &  1  &    14 874 753 568\\
366  &  185  &    58 236 270 451    &  2  &    15 754 940 959\\
368  &  186  &    61 437 700 788    &  1  &    16 705 334 454\\
370  &  187  &    64 363 670 678    &  1  &    17 683 643 273\\
372  &  188  &    67 868 149 215    &  1  &    18 744 292 915\\
374  &  189  &    71 052 718 441    &  0  &    19 816 289 281\\
376  &  190  &    74 884 539 987    &  1  &    20 992 425 825\\
378  &  191  &    78 364 039 771    &  0  &    22 186 413 139\\
380  &  192  &    82 532 990 559    &  3  &    23 475 079 272\\
382  &  193  &    86 329 680 991    &  0  &    24 795 898 388\\
384  &  194  &    90 881 152 117    &  3  &    26 227 197 453\\
386  &  195  &    95 001 297 565    &  0  &    27 670 862 550\\
388  &  196  &    99 963 147 805    &  1  &    29 254 036 711\\
390  &  197  &    104 453 597 992    &  1  &    30 852 950 986\\
392  &  198  &    109 837 310 021    &  1  &    32 581 366 295\\
394  &  199  &    114 722 988 623    &  0  &    34 345 173 894\\
396  &  200  &    120 585 261 143    &  1  &    36 259 212 641\\
398  &  201  &    125 873 325 588    &  0  &    38 179 777 473\\
400  &  202  &    132 247 999 328    &  4  &    40 286 153 024\\
\hline
\end{tabular}
%\end{center}
}
\caption{Counts of all fullerenes, irreducible IPR fullerenes and IPR fullerenes (continued). nv is the number of vertices and nf is the number of faces.}

\label{table:fuller_counts_3}

\end{table}

\begin{acknowledgements}
Most computations for this work were carried out using the Stevin Supercomputer Infrastructure at Ghent University.
We would like to thank Gunnar Brinkmann and Jack Graver for useful suggestions.
\end{acknowledgements}

\bibliographystyle{plain}
\bibliography{references}

\end{document}